\author[Jian Liu]{Jian Liu}
\address{School of Mathematics and Statistics, Central China Normal University,  Wuhan 430079, P.R. China}
\email{jianliu@ccnu.edu.cn}
\author{Wei Ren}
\address{School of Mathematical Sciences, Chongqing Normal University, Chongqing 401331, P.R. China}
\email{wren@cqnu.edu.cn}
\keywords{ascent and descent, Gorenstein projective module, Gorenstein dimension, semi-dualizing complex, perfect complex.}
\subjclass[2020]{18G20 (primary); 16E65, 18G10, 18G80 (secondary)}
\DeclareMathOperator{\h}{H}
\newcommand{\n}{\mathfrak{n}}
\newcommand{\Z}{\mathbb{Z}}
\newcommand{\D}{\mathsf{D}}
\newcommand{\m}{\mathfrak{m}}
\newcommand{\p}{\mathfrak{p}}
\DeclareMathOperator{\Ima}{Im}
\DeclareMathOperator{\pd}{pd}
\DeclareMathOperator{\Gdim}{G-dim}
\DeclareMathOperator{\id}{id}
\DeclareMathOperator{\Hom}{Hom}
\DeclareMathOperator{\Ext}{Ext}
\DeclareMathOperator{\RHom}{\mathsf{RHom}}
\newtheorem{theorem}{Theorem}[section]
\newtheorem{proposition}[theorem]{Proposition}
\newtheorem{lemma}[theorem]{Lemma}
\newtheorem{corollary}[theorem]{Corollary}
\theoremstyle{definition}
\newtheorem{example}[theorem]{Example}
\newtheorem{remark}[theorem]{Remark}
\newtheorem{definition}[theorem]{Definition}
\newtheorem{chunk}[theorem]{}
\newtheorem*{ack}{Acknowledgements}
\title{Ascent and descent of Gorenstein homological properties}
\date{}
\begin{document}
\maketitle
\begin{abstract}
Let $\varphi\colon R\rightarrow A$ be a ring homomorphism, where $R$ is a commutative noetherian ring and $A$ is a finite $R$-algebra. We provide criteria for detecting the ascent and descent of Gorenstein homological properties.
Furthermore, we observe that the ascent and descent of Gorenstein homological property can detect the Gorenstein property of rings along  $\varphi$. 
\end{abstract}

\section{Introduction}

The study of the Gorenstein homological algebra can be traced back to the 1960s. Auslander and Bridger \cite{AB}  introduced the notion of the Gorenstein dimension for finitely generated modules, and they generalized the well-known Auslander-Buchsbaum formula to finitely generated modules with finite Gorenstein dimension. A finitely generated module with Gorenstein dimension zero is referred to as a Gorenstein projective module in \cite{EJ1}, and as a totally reflexive module in \cite{AM}. For an Iwanaga-Gorenstein ring, Buchweitz \cite{Buc} established a triangle equivalence between the stable category of finitely generated Gorenstein projective modules and the singularity category; this celebrated theorem highlighted the theory of the Gorenstein homological algebra.

For a surjective ring homomorphism $\varphi\colon R\rightarrow A$ of commutative noetherian local rings, if the kernel of $\varphi$ is generated by a regular sequence, then it is well-known that $A$ is Cohen-Macaulay (resp. Gorenstein, complete intersection) if and only if so is $R$.
Moreover, such ascent and descent properties along a ring homomorphism can be determined by certain homological properties; see for example \cite{AF, AFH, GS}.

Let $\varphi\colon R\rightarrow A$ be a ring homomorphism. We say $\varphi$ has \emph{ascent and descent of Gorenstein projective property} if each finitely generated left or right $A$-module is Gorenstein projective if and only if the underlying $R$-module is Gorenstein projective. An interesting example is due to Buchweitz \cite[8.2]{Buc}; he observed that for the integral group ring extension $\Z\rightarrow \Z G$ of a finite group $G$, a finitely generated $\mathbb{Z}G$-module (or equivalently, an integral representation of $G$) is Gorenstein projective if and only if the underlying $\mathbb{Z}$-module is Gorenstein projective (or equivalently, $\mathbb Z$-free). 

Notice that the above $\Z\rightarrow \Z G$ is a classical example of Frobenius extension \cite{Kas}. Inspired by this fact, Chen \cite{Chen} introduced the totally reflexive extension of rings and proved that such extension has ascent and descent of Gorenstein projective property. This motivates the subsequent works of Ren \cite{Ren} and Zhao \cite{Zhao} on the ascent and descent of Gorenstein projective property for (not necessarily finitely generated) modules along Frobenius extension of rings.

However, there are ring homomorphisms that satisfy the ascent and descent of Gorenstein projective property, but may not be Frobenius extensions; see Examples \ref{converseFro} and \ref{eg2}. Inspired by the aforementioned facts, it is natural to ask: how a ring homomorphism might behave if it has ascent and descent of Gorenstein projective property?

To study the ascent and descent of Gorenstein projective property, we first investigate in Section \ref{sectionadfd} the ring homomorphism $\varphi$ which has \emph{ascent and descent of finite Gorenstein dimension property}. The main result in this section is a characterization of this property; see Theorem \ref{observation}. 
Then, in Section \ref{sectionadgp}, we get the following result which concerns our question. 

\begin{theorem}\label{t2}(See \ref{ADTF})
Let $\varphi\colon R\rightarrow A$ be a ring homomorphism, where $R$ is a commutative noetherian ring and $A$ is a finite $R$-algebra.
The following two conditions are equivalent$\colon$
\begin{enumerate}
\item  $A$ is Gorenstein projective over $R$ and $\Hom_R(A,R)$ is projective as both a left and a right $A$-module.

\item $\varphi$ has ascent and descent of Gorenstein projective property.
\end{enumerate}
\end{theorem}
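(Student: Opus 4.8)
The plan is to prove the two implications separately, and both rest on the following two observations. Because $A$ is module-finite over the noetherian ring $R$, the ring $A$ is noetherian on both sides and $\omega:=\Hom_R(A,R)$ is a finitely generated $A$-module on each side; moreover, as soon as $A$ is Gorenstein projective over $R$ one has $\Ext^{>0}_R(A,R)=0$, hence $\RHom_R(A,R)\simeq\omega$, and then $\omega$ is itself Gorenstein projective over $R$ with a canonical bimodule isomorphism $\Hom_R(\omega,R)\cong A$ (totally reflexive modules are closed under $R$-duality, and a totally reflexive module is reflexive). The second observation is that the derived Hom--tensor adjunction gives, for every complex $N$ of left or right $A$-modules, a natural isomorphism $\RHom_A(N,\omega)\simeq\RHom_R(N,R)$ whenever $\Gdim_R A=0$. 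I will also use freely that direct summands of finitely generated Gorenstein projectives are Gorenstein projective, and that over a noetherian ring $\Lambda$ a module $M$ of finite Gorenstein dimension satisfies $\Gdim_\Lambda M=\sup\{i:\Ext^i_\Lambda(M,\Lambda)\neq0\}$.

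For $(2)\Rightarrow(1)$: applying $(2)$ to the free $A$-module $A$ shows $A$ is Gorenstein projective over $R$; hence $\omega$ is Gorenstein projective over $R$, and applying $(2)$ to the finitely generated $A$-module $\omega$ shows $\omega$ is Gorenstein projective over $A$, on both sides. Feeding $N=\omega$ into the adjunction gives $\RHom_A(\omega,\omega)\simeq\RHom_R(\omega,R)\simeq\Hom_R(\omega,R)\cong A$, and by naturality this makes the homothety $A\to\RHom_A(\omega,\omega)$ an isomorphism, so $\omega$ is a semidualizing $A$-module on each side. Now one invokes the rigidity principle that a semidualizing module of finite Gorenstein dimension over a noetherian ring is projective; since $\Gdim_A\omega=0$, the bimodule $\omega$ is projective over $A$ on both sides, which is $(1)$. (Alternatively, $(2)$ forces ascent and descent of finite Gorenstein dimension, and the semidualizing conclusion can be read off Theorem~\ref{observation}.)

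For $(1)\Rightarrow(2)$: under $(1)$ the same computation still yields $\Hom_A(\omega,\omega)\cong A$, and since $\omega$ is now finitely generated projective over $A$ it is a direct summand of some $A^{\oplus m}$, whence $A\cong\Hom_A(\omega,\omega)$ is a direct summand of $\Hom_A(A^{\oplus m},\omega)\cong\omega^{\oplus m}$; thus $\omega$ is a progenerator over $A$ on each side. Moreover $(1)$ implies the criterion of Theorem~\ref{observation} ($\Gdim_R A=0$, and $\RHom_R(A,R)=\omega$ is semidualizing over $A$ on both sides by the computation above), so $\varphi$ has ascent and descent of finite Gorenstein dimension. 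Let $M$ be a finitely generated left or right $A$-module. If $M$ is Gorenstein projective over $A$, then $\Gdim_A M=0<\infty$, so $\Gdim_R M<\infty$; and $\Ext^i_R(M,R)\cong\Ext^i_A(M,\omega)$ is a direct summand of $\Ext^i_A(M,A)^{\oplus m}=0$ for $i>0$, so $\Gdim_R M=0$. Conversely, if $M$ is Gorenstein projective over $R$, then $\Gdim_R M=0<\infty$, so $\Gdim_A M<\infty$; and $\Ext^i_A(M,A)$ is a direct summand of $\Ext^i_A(M,\omega^{\oplus m})\cong\Ext^i_R(M,R)^{\oplus m}=0$ for $i>0$, so $\Gdim_A M=0$. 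This gives $(2)$.

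I expect the one genuinely delicate step to be, in $(2)\Rightarrow(1)$, upgrading ``$\omega$ is Gorenstein projective over $A$'' to ``$\omega$ is projective over $A$'': this is exactly where the semidualizing identity $\RHom_A(\omega,\omega)\simeq\RHom_R(\omega,R)\cong A$ — a consequence of the adjunction together with the $R$-reflexivity of $A$ — is indispensable, and where one must cite (or reprove) the rigidity of semidualizing modules of finite Gorenstein dimension; when $A$ is noncommutative one should argue locally over $R$, where $A$ becomes semiperfect and the classical argument applies, and keep track of the two one-sided conditions. Everything else reduces, once Theorem~\ref{observation} is available, to the adjunction, the progenerator property of $\omega$, and the computation of finite Gorenstein dimension by $\Ext(-,\Lambda)$.
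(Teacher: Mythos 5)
Your direction $(1)\Rightarrow(2)$ is essentially the paper's argument with a cosmetic variant: the paper rewrites $\Ext^i_A(M,A)$ as $\Ext^i_R\bigl(\Hom_R(A,R)\otimes_A M,R\bigr)$ using $A\cong\Hom_R(\omega,R)$ and adjunction, while you instead observe that $A$ is a direct summand of $\omega^{\oplus m}$ because $\omega$ is a progenerator (both uses are fine, and both reduce to the same two adjunction identities). One small slip: you say ``$\omega$ is semidualizing over $A$ on both sides'' is the criterion of Theorem~\ref{observation}; what that theorem actually requires is that $\RHom_R(A,R)$ be \emph{perfect} over $A$ and $A^{\rm op}$, which of course holds since $\omega$ is projective on both sides by $(1)$.

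Your direction $(2)\Rightarrow(1)$, however, has a genuine gap. After correctly showing that $\omega=\Hom_R(A,R)$ is Gorenstein projective over $A$ and $A^{\rm op}$ and that the homothety $A\to\Hom_{A^{\rm op}}(\omega,\omega)$ is an isomorphism, you invoke a ``rigidity principle: a semidualizing module of finite Gorenstein dimension over a noetherian ring is projective.'' That rigidity is a theorem in the \emph{commutative local} case (essentially \cite[Cor.~8.6]{Chr01}, also Gerko, Sather-Wagstaff), but $A$ here is a possibly noncommutative finite $R$-algebra, and I am not aware of a citable version in that generality; your suggested reduction to the semiperfect case over $R_\p$ is a sketch, not a proof, and the step it would rely on is itself nontrivial. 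The paper avoids the issue entirely with a short, self-contained argument: take $0\to\omega\to F\to C\to 0$ in $A\text{-}\mathrm{mod}$ with $F$ projective and $C$ Gorenstein projective (this exists because $\omega$ is Gorenstein projective over $A$); by descent $C$ is Gorenstein projective over $R$, so $\Ext^1_A(C,\omega)\cong\Ext^1_R(C,R)=0$ by adjunction, the sequence splits, and $\omega$ is projective. Your parenthetical alternative (``$(2)$ forces ascent and descent of finite Gorenstein dimension, read off Theorem~\ref{observation}'') could also be made to work — from ADfGd, Theorem~\ref{observation} gives that $\omega$ is perfect, and a perfect module that is Gorenstein projective is projective — but then you would owe a direct proof that ADGP implies ADfGd (by a GP-syzygy dimension-shift over $R$, e.g.\ via \cite[Thm.~2.20]{Hol}), which you don't supply; note the paper derives that implication \emph{from} Theorem~\ref{ADTF}, so using it here without an independent proof would be circular.
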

Let $\varphi$ be as in Theorem \ref{t2}. Recall that $\varphi\colon R\rightarrow A$ is a Frobenius extension provided that $A$ is a projective $R$-module and there is an isomorphism $\Hom_R(A,R)\cong A$ as left $A$-modules. According to \cite{Kad}, the second condition can be replaced by the existence of an isomorphism $\Hom_R(A,R)\cong A$ as right $A$-modules. It is of interest to compare condition (1) in the above with the condition for a Frobenius extension.

By making use of Theorem \ref{t2}, we show that the ascent and descent of Gorenstein projective property is a local property, i.e., $\varphi$ has ascent and descent of Gorenstein projective property if and only if $\varphi_\p\colon R_\p\rightarrow A_\p$ does so for each prime ideal (equivalently, maximal ideal) $\p$ of $R$; see Corollary \ref{local property}.


In Section \ref{testsection}, we study how the Gorenstein property of rings behaves along the ring homomorphism $\varphi\colon R\rightarrow A$. The main result in this section is the following. By combining it with Theorem \ref{observation} and a result of Christensen \cite[Proposition 8.3]{Chr01}, we can reobtain a result of Avramov and Foxby; see Corollary \ref{recover}.

\begin{theorem}\label{t3}(See \ref{ascentG})
Let $\varphi\colon R\rightarrow A$ be a ring homomorphism, where $R$ is a commutative noetherian ring and $A$ is a finite $R$-algebra. Assume $\varphi$ has ascent and descent of finite Gorenstein dimension property. If $R$ is an Iwanaga-Gorenstein ring, then so is $A$. The converse holds if, in addition, the fibre $A\otimes_R R/\m$ is nonzero for each maximal ideal $\m$ of $R$.
\end{theorem}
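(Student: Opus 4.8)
The plan is to exploit the characterization in Theorem \ref{observation} together with standard properties of Iwanaga-Gorenstein rings, reducing to a statement about (finite) injective dimensions that can be tracked along $\varphi$. Recall that a (left and right noetherian) ring is Iwanaga-Gorenstein precisely when it has finite injective dimension as a module over itself on both sides; and since $A$ is a finite $R$-algebra, $A$ is left and right noetherian once $R$ is noetherian. The key input from Section \ref{sectionadfd} is that $\varphi$ having ascent and descent of finite Gorenstein dimension property should be equivalent (by Theorem \ref{observation}) to $A$ having finite Gorenstein dimension as an $R$-module together with a ``base change'' compatibility for the semidualizing/dualizing data. In particular, when $R$ is Iwanaga-Gorenstein, $R$ itself is a dualizing complex for $R$ up to shift, and the finiteness of $\Gdim_R A$ forces $\RHom_R(A,R)$ to be a bounded complex; I would then argue that this complex serves as a dualizing complex for $A$, whence $A$ has finite injective dimension over itself and is Iwanaga-Gorenstein.

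Concretely, the forward direction would proceed as follows. First I would record that $R$ Iwanaga-Gorenstein implies $R$ has finite $R$-injective dimension, hence $\operatorname{id}_R R < \infty$ on both sides, and $R$ (as a complex concentrated in degree $0$) is a dualizing complex. Next, using that $\varphi$ has ascent and descent of finite Gorenstein dimension property, I would invoke Theorem \ref{observation} to conclude that $\Gdim_R A < \infty$, and more: that every finitely generated $A$-module has finite Gorenstein dimension over $A$ if and only if it does over $R$. Since over the Iwanaga-Gorenstein ring $R$ \emph{every} finitely generated module has finite Gorenstein dimension, the same must hold over $A$; and a (noetherian) ring over which every finitely generated module has finite Gorenstein dimension is exactly an Iwanaga-Gorenstein ring. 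This last equivalence is classical (it is part of the Auslander--Bridger/Enochs--Jenda circle of ideas, and follows because finiteness of $\Gdim$ for the residue modules localizes and bounds $\operatorname{id} A$). One must be a little careful that the ``left or right'' bookkeeping in the definition of ascent and descent of finite Gorenstein dimension property matches the two-sided finiteness of injective dimension required for Iwanaga-Gorenstein; I expect Section \ref{sectionadfd} to have set this up symmetrically.

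For the converse, assume $A$ is Iwanaga-Gorenstein and that the fibre $A \otimes_R R/\m \ne 0$ for every maximal ideal $\m$ of $R$. The nonvanishing of all fibres guarantees that $\varphi$ is ``surjective on spectra'' in the relevant sense: $\Spec A \to \Spec R$ hits every maximal ideal, hence (since $R$ is noetherian) is surjective, so no prime of $R$ is lost. I would then run the same equivalence in reverse: every finitely generated $A$-module has finite Gorenstein dimension over $A$ (as $A$ is Iwanaga-Gorenstein), hence by Theorem \ref{observation} has finite Gorenstein dimension over $R$; applying this to modules of the form $A/\m A$ (nonzero by hypothesis) and to $R/\m$ itself — noting $R/\m$ is a subquotient-related module one can reach through $A$-modules supported at $\m$ — one deduces $\Gdim_R R/\m < \infty$ for every maximal $\m$, and therefore $\operatorname{id}_{R_\m} R_\m < \infty$ for every $\m$, which (with $R$ noetherian of finite Krull dimension, itself forced here) gives $\operatorname{id}_R R < \infty$, i.e.\ $R$ is Iwanaga-Gorenstein.

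The main obstacle I anticipate is the converse's ``descent to $R/\m$'' step: the ascent/descent hypothesis is phrased for modules coming from $A$, so to control $\Gdim_R R/\m$ I need to produce, for each maximal ideal $\m$ of $R$, an $A$-module whose restriction to $R$ detects the finiteness of $\operatorname{id}_{R_\m} R_\m$ — and this is exactly where the fibre hypothesis $A\otimes_R R/\m \ne 0$ is used, ensuring $A_\m \ne 0$ and that $A$ has a module supported precisely over $\m$. A secondary technical point is verifying that ``every finitely generated module has finite Gorenstein dimension'' is genuinely equivalent to the Iwanaga-Gorenstein condition in the generality needed (non-commutative $A$, two-sided); I would cite the relevant standard result rather than reprove it, and lean on Theorem \ref{observation} and Christensen's \cite[Proposition 8.3]{Chr01} to bridge finite Gorenstein dimension and finite injective dimension exactly as the paragraph before Theorem \ref{t3} suggests.
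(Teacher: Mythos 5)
Your forward direction hinges on the claim that a two\-/sided noetherian ring over which every finitely generated left and right module has finite Gorenstein dimension is Iwanaga-Gorenstein, and this is a genuine gap. That implication is not a classical fact in the generality needed here: $A$ is a finite $R$-algebra that need not be commutative, so your justification (``finiteness of $\Gdim$ for the residue modules localizes and bounds $\id A$'') does not apply — there are no residue fields or localizations for $A$ in that sense. What is true is that $A$ is Iwanaga-Gorenstein iff the finitely generated left and right modules have \emph{uniformly bounded} Gorenstein dimension; without a uniform bound the implication is, for Artin algebras, essentially an open problem, and the ascent hypothesis by itself gives no such bound (it only transfers \emph{finiteness}, not a bound, from $R$ to $A$). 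The paper avoids this entirely: from Theorem \ref{observation} it gets that $D=\Hom_R(A,I)$ is perfect over $A^{\rm op}$, observes that $D$ is a bounded complex whose terms are injective over $A$ and $A^{\rm op}$ (because $I$ is bounded and $\Hom_R(A,E)$ is injective over $A$ for $E$ injective over $R$), takes a bounded resolution $Q\xrightarrow{\simeq}D$ by finitely generated projective $A^{\rm op}$-modules, and exhibits $A\simeq\Hom_{A^{\rm op}}(D,D)\to\Hom_{A^{\rm op}}(Q,D)$ as a bounded injective resolution of $A$, giving $\id_A(A)<\infty$ directly. Even in the commutative case your route would need the extra remark that $A$ has finite Krull dimension, since ``locally Gorenstein'' does not imply Iwanaga-Gorenstein in general — a distinction the paper itself emphasizes.

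Your converse is closer to the paper's: the use of the fibre $A\otimes_R R/\m\neq 0$, which as an $R$-module is a nonzero direct sum of copies of $R/\m$, to detect $\Gdim_R(R/\m)<\infty$ is exactly the intended mechanism. But the final step — from ``$R_\m$ is Gorenstein for every $\m$'' to ``$\id_R(R)<\infty$'' — again needs a bound on $\id_{R_\m}(R_\m)$ that is uniform in $\m$, and you only assert that finite Krull dimension ``is forced here'' without an argument. The paper produces the uniform bound explicitly: from $\Ext^i_R(M,R)\cong\Ext^i_A(M,D)$ together with $D$ perfect over $A$ and $A^{\rm op}$ and $\id_A(A)<\infty$, there is a single integer $j$ with $\Ext^{>j}_R(M,R)=0$ for \emph{all} $A$-modules $M$; taking $M=A\otimes_R R/\m$ then yields $\id_{R_\m}(R_\m)\le j$ for every $\m$ simultaneously, hence $\id_R(R)\le j$. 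You should either reproduce this uniform estimate or supply an independent proof that $\dim R<\infty$; as written, both halves of your argument stop short of the two-sided, uniformly bounded statements that the Iwanaga-Gorenstein condition actually requires.
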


\section{Preliminaries}

We say a ring is noetherian if it is a two-sided noetherian ring.
 Throughout, $A$ will be a noetherian ring. In what follows,  a module over $A$ will mean a left $A$-module, and a module over $A^{\rm op}$  is identified with a right $A$-module. The category of $A$-modules and its full subcategory consisting of finitely generated $A$-modules will be denoted by $A\mbox{-Mod}$ and $A\mbox{-mod}$, respectively. An $A\mbox{-}A$-bimodule $M$ will mean a module over both $A$ and $A^{\rm op}$ that satisfies $(a_1\cdot m)\cdot a_2=a_1\cdot (m\cdot a_2)$ for all $a_1,a_2\in A$ and $m\in M$.

\begin{chunk}\label{gp}
\textbf{Gorenstein projective modules.} An acyclic complex of projective  $A$-modules
 $$
 \mathbf{P}  = \cdots \longrightarrow P_{1}\stackrel{\partial_{1}}\longrightarrow P_0\stackrel{\partial_0}\longrightarrow P_{-1}\longrightarrow\cdots
 $$
is called \emph{totally acyclic} provided that $\Hom_A(\mathbf{P}, Q)$ is still acyclic for any projective $A$-module $Q$. An $A$-module $M$ is \emph{Gorenstein projective} if there is a totally acyclic complex $\mathbf{P}$ such that $M$ is isomorphic to the image of $\partial_0$. Any projective  $A$-module is Gorenstein projective. For each totally acyclic complex $\mathbf{P}$, the image of $\partial_i$, denoted by ${\rm Im}(\partial_i)$, is Gorenstein projective for each $i\in \Z$.

The following characterization is well-known; see \cite[Proposition 3.8]{AB}. Let $M$ be a finitely generated  $A$-module. Then $M$ is Gorenstein projective if and only if $\Ext^{i}_A(M, A) = 0 = \Ext^{i}_{A^{\rm op}}(\Hom_A(M,A),A)=0$ for all $i>0$, and the evaluation homomorphism ${\rm e}_{M,A}\colon M\rightarrow \Hom_{A^{\rm op}}(\Hom_A(M,A),A)$ is an isomorphism. Thus, a finitely generated Gorenstein projective module is also called a {\em totally reflexive module}; see \cite{AM}.
\end{chunk}

\begin{chunk}
\textbf{Derived categories.} Let $\D(A)$ denote the derived category of complexes of  $A$-modules. It is a triangulated category with the suspension functor $[1]$; for each complex $X$, $X[1]_i\colonequals X_{i-1}$, and  $\partial_{X[1]}\colonequals -\partial_X$. Its full subcategory consisting of complexes with finitely generated total homology will be denoted by $\D^f_b(A)$. More precisely, for each $X\in \D(A)$, it is in $\D^f_b(A)$ if and only if $\h_i(X)$ is finitely generated for all $i$ and $\h_i(X)=0$ for all $|i|\gg 0$. The category $\D^f_b(A)$ inherits the structure of the triangulated category from $\D(A)$.

A complex $X$ is said to be \emph{homotopy projective} (resp. \emph{homotopy injective}) provided that $\Hom_A(X,-)$ (resp. $\Hom_A(-,X)$) preserves acyclic complexes. See \cite[Section 3]{Spa} for the existence of the homotopy projective resolution and the homotopy injective resolution of complexes. A complex $X$ is said to be \emph{bounded below} (resp. \emph{bounded above}) if $X_i=0$ for $i\ll 0$ (resp. $i\gg 0$). Every bounded below (resp. bounded above) complex of projective (resp. injective) modules is homotopy projective (resp. homotopy injective).

Let $\RHom_A(-,-)\colon \D(A)^{\rm op}\times \D(A)\rightarrow \D(\Z)$ denote the right derived functor of $\Hom_A(-,-)$. For each $M, N$ in $\D(A)$, the complex $\RHom_A(M, N)$ can be represented by either $\Hom_A(P,N)$ or $\Hom_A(M,I)$, where $P\xrightarrow \simeq M$ is a homotopy projective resolution and $N\xrightarrow\simeq I$ is a homotopy injective resolution.

Let $-\otimes_A^{\rm L}-\colon \D(A^{\rm op})\times \D(A)\rightarrow \D(\Z)$ denote the left derived functor of $-\otimes_A-$. For each $M$ in $\D(A^{\rm op})$ and $N$ in $\D(A)$, the complex $M\otimes_A^{\rm L}N$ can be represented by either $P\otimes_A N$ or $M\otimes_A Q$, where $P\xrightarrow \simeq M$ and $Q\xrightarrow \simeq N$ are homotopy projective resolutions over $A^{\rm op}$ and $A$, respectively.
\end{chunk}

\begin{chunk}
\textbf{Gorenstein dimensions.} Let $M$ be a complex in $\D^f_b(A)$. A quasi-isomorphism $G\xrightarrow \simeq M$ is a \emph{Gorenstein projective resolution} of $M$ provided that $G_i$ is a Gorenstein projective module for each $i\in \Z$ and  $G_{i}=0$ for $i\ll 0$. The \emph{Gorenstein dimension} of $M$, denoted by $\Gdim_A(M)$, is the smallest integer $n$ such that there is a Gorenstein projective resolution $G\xrightarrow \simeq M$ such that $G_i = 0$ for $i > n$ and $G_{n}\neq 0$; see for example \cite[Definition 2.3.2]{Chr00}.

For each $M\in A\mbox{-mod}$, we consider it as a stalk complex concentrated in degree zero, and $\Gdim_A(M)$ is precisely the Gorenstein projective dimension of $M$; see for example \cite[Definition 2.8]{Hol}. It is clear that $\Gdim_A(M)\leq \pd_A(M)$, where $\pd_A(M)$ is the projective dimension of $M$ over $A$; the equality holds if the latter is finite.
\end{chunk}

\begin{chunk}\label{def of I-Gorenstein}
\textbf{Iwanaga-Gorenstein rings.}
A noetherian ring $A$ is said to be \emph{Iwanaga-Gorenstein} provided that $A$ has finite injective dimension over both $A$ and $A^{\rm op}$. It follows from \cite[Lemma A]{Zaks} that if $A$ is Iwanaga-Gorenstein, then
$\id_A(A)=\id_{A^{\rm op}}(A)<\infty$. 

For each finitely generated $A$-module $M$, $\Gdim_A(M)\leq \id_A(A)$; see \cite[Corollary 2.21]{Hol}.
In particular, if $A$ is Iwanaga-Gorenstein, then each finitely generated $A$-module has finite Gorenstein dimension, and hence each complex in $\D^f_b(A)$ has finite Gorenstein dimension as well.
\end{chunk}

\begin{chunk}
\textbf{Perfect complexes.}  A complex $X$ in $\D(A)$ is said to be \emph{perfect} provided that it is isomorphic in $\D(A)$ to a bounded complex of finitely generated projective $A$-modules; equivalently, $X$ is compact as an object in $\D(A)$. See \cite[Chapter 1]{Buc} and \cite{Neeman} for more details about perfect complexes.

The full subcategory of $\D(A)$ consisting of perfect complexes is precisely the smallest triangulated subcategory of $\D(A)$ which contains $A$ and is closed under direct summands.
\end{chunk}
\begin{chunk}
\textbf{Semi-dualizing complexes.} Let $D$ be a complex of $A\mbox{-}A$-bimodules. $D$ is a \emph{semi-dualizing complex} over $A$ if the following conditions are satisfied:

(1) The total homology $\h(D)$ is finitely generated over $A$ and $A^{\rm op}$;

(2) $D_i=0$ for $i\gg 0$ and each $D_i$ is injective over $A$ and $A^{\rm op}$;

(3) The homothety morphisms
$$
{\rm m}_D\colon A\rightarrow \Hom_{A^{\rm op}}(D,D); a\mapsto (x\mapsto ax)
$$
and
$$
{\rm m}^\prime_D\colon A^{\rm {op}}\rightarrow \Hom_{A}(D,D); a\mapsto (x\mapsto xa)
$$
are quasi-isomorphisms. If, in addition, $D$ is a bounded complex of injective modules over $A$ and $A^{\rm op}$, then $D$ is called a \emph{dualizing complex}.

When $A$ is commutative, the above definition of the semi-dualizing complex coincides with the definition in \cite[Definition 2.1]{Chr01}.
\end{chunk}

\section{Ascent and descent of finite Gorenstein dimension property}\label{sectionadfd}
The main result of this section is Theorem \ref{observation} which provides a description of the ascent and descent of finite Gorenstein dimension property. 

For a ring homomorphism $\varphi\colon R\rightarrow A$, the map $\varphi$ is said to be \emph{finite} provided that $A$ is finitely generated over both $R$ and $R^{\rm op}$.

\begin{definition}\label{def:ADfGd}
Let $\varphi\colon R\rightarrow A$ be a finite ring homomorphism between noetherian rings.
We say $\varphi$ has \emph{ascent and descent of finite Gorenstein dimension property} if the following two conditions are satisfied$\colon$
\begin{enumerate}
\item For each complex in $\D^f_b(A)$, it has finite Gorenstein dimension over $A$  if and only if it has finite Gorenstein dimension over $R$;

\item For each complex in $\D^f_b(A^{\rm op})$, it has finite Gorenstein dimension over $A^{\rm op}$  if and only if it has finite Gorenstein dimension over $R^{\rm op}$.
\end{enumerate}
\end{definition}

By \ref{def of I-Gorenstein}, if $\varphi\colon R\rightarrow A$ is a finite ring homomorphism between Iwanaga-Gorenstein rings, then $\varphi$ has ascent and descent of finite Gorenstein dimension property.
Conversely, if $\varphi$ has this property, then the Iwanaga-Gorenstein property of rings is tested in Theorem \ref{ascentG}.

In the following, we abbreviate $\Hom_A(-,A)$ and $\Hom_{A^{\rm op}}(-,A)$ as $(-)^\ast$; there will be no confusion. Let $M$ be a complex in $\D^f_b(A)$. One can choose a homotopy projective resolution $\pi_M\colon P\xrightarrow \simeq M$, where $P$ is a bounded below complex of finitely generated projective  $A$-modules. Let $\iota\colon A\xrightarrow \simeq I$ be an injective resolution over $A^{\rm op}$. Denote by $\eta_M$ the composition of the following morphisms
$$
\xymatrix{
\eta_M\colon P \ar[r]^-{{\rm e}_{P,A}}_-\cong & \Hom_{A^{\rm op}}(P^\ast,A)\ar[r]^-{\iota_\ast}& \Hom_{A^{\rm op}}(P^\ast,I).
}
$$
Then, the \emph{biduality morphism} $\delta_M\colon M\rightarrow \RHom_{A^{\rm op}}(\RHom_A(M,A),A)$ of $M$, as a morphism in the derived category $\D(\Z)$, can be defined as the right fration
$$
\eta_M/\pi_M\colon M\rightarrow \Hom_{A^{\rm op}}(P^\ast,I).
$$

The following is a characterization of the finite Gorenstein dimension; see \cite[Theorem 10.4.5]{CFH}. In the commutative case, it is included in \cite[Corollary 2.3.8]{Chr00}.

\begin{lemma}\label{test}
Let $M$ be a complex in $\D^f_b(A)$. Then  $\Gdim_A(M)$ is finite if and only if the following two conditions are satisfied$\colon$
\begin{enumerate}
\item $\RHom_A(M,A)$ is in $\D^f_b(A^{\rm op})$;

\item The biduality morphism $\delta_M\colon M\rightarrow \RHom_{A^{\rm op}}(\RHom_A(M,A),A)$ is an isomorphism in $\D(\Z)$.
\end{enumerate}
\end{lemma}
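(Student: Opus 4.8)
The plan is to prove the two implications separately. Write $(-)^{\dagger}=\RHom_A(-,A)$ and $(-)^{\ddagger}=\RHom_{A^{\rm op}}(-,A)$, so that $(2)$ says that $\delta_M\colon M\to M^{\dagger\ddagger}$ is an isomorphism. Two general facts will be used repeatedly: that $(-)^{\dagger}$ carries perfect complexes to perfect complexes, and that $\delta_{(-)}$ is a natural transformation of triangulated functors, so it turns a distinguished triangle into a morphism of distinguished triangles; together with the triangulated ``five lemma'' (if two of the three vertical arrows in a morphism of triangles are isomorphisms, so is the third) this lets me transport conditions $(1)$ and $(2)$ along triangles.

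Assume first $\Gdim_A(M)<\infty$. Let $\mathcal R\subseteq\D^f_b(A)$ be the full subcategory of complexes satisfying $(1)$ and $(2)$. Naturality of $\delta$, triangulatedness of $(-)^{\dagger}$ and $(-)^{\ddagger}$, and thickness of $\D^f_b(A^{\rm op})$ in $\D(A^{\rm op})$ show that $\mathcal R$ is a thick subcategory of $\D^f_b(A)$; it contains $A$, and it contains every finitely generated totally reflexive module $T$, because the Auslander--Bridger criterion of \ref{gp} gives $\Ext^{>0}_A(T,A)=0=\Ext^{>0}_{A^{\rm op}}(T^{\ast},A)$, whence $T^{\dagger}\simeq T^{\ast}$ and $T^{\dagger\ddagger}\simeq T^{\ast\ast}$ are stalk complexes and $\delta_T$ is identified with the bijective evaluation ${\rm e}_{T,A}$. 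Since $M$ has finite Gorenstein dimension it admits a bounded resolution by finitely generated totally reflexive modules, so $M$ is a finite iterated extension of shifts of such modules; as $\mathcal R$ is thick, $M\in\mathcal R$.

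Conversely, assume $(1)$ and $(2)$; we must show $\Gdim_A(M)<\infty$. First reduce to the case of a module. Pick a resolution $P\xrightarrow{\ \simeq\ }M$ by finitely generated projectives with $P_i=0$ for $i>\sup M$, fix any $n<\inf M$, and set $C\colonequals\operatorname{Im}(\partial^P_{n+1})$, a finitely generated $A$-module. The stupid truncation $0\to\sigma_{\leq n}P\to P\to\sigma_{>n}P\to0$ identifies $\sigma_{\leq n}P\simeq C[n]$ and exhibits $F\colonequals\sigma_{>n}P$ as a perfect complex, hence gives a triangle $C[n]\to M\to F\to C[n+1]$ (and $M\simeq(C\to P_{n+1}\to\cdots\to P_{\sup M})$). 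Applying $(-)^{\dagger}$ and using $F^{\dagger}\in\D^f_b(A^{\rm op})$ together with $(1)$ for $M$ gives $C^{\dagger}\in\D^f_b(A^{\rm op})$; applying $(-)^{\dagger\ddagger}$ and the five lemma, with $\delta_M$ and $\delta_F$ isomorphisms, gives that $\delta_{C[n]}$, hence $\delta_C$, is an isomorphism. So $C$ satisfies $(1)$ and $(2)$, and since $F$ has finite projective dimension and finite Gorenstein dimension is preserved along triangles, it is enough to prove $\Gdim_A(C)<\infty$. Now let $C$ be a finitely generated $A$-module satisfying $(1)$ and $(2)$; by $(1)$ the number $g\colonequals\sup\{i:\Ext^i_A(C,A)\neq0\}$ is finite. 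Take a finitely generated projective resolution $P\xrightarrow{\ \simeq\ }C$, fix $l>g$, and put $\Omega\colonequals\operatorname{Im}(\partial^P_l)$, so that $0\to\Omega\to P_{l-1}\to\cdots\to P_0\to C\to0$ is exact and yields a triangle $\Omega[l]\to C\to F'\to\Omega[l+1]$ with $F'=(P_{l-1}\to\cdots\to P_0)$ perfect. Since $P$ is a genuine projective resolution of the module $C$, dimension shifting gives $\Ext^i_A(\Omega,A)\cong\Ext^{i+l}_A(C,A)=0$ for $i\geq1$, so $\Omega^{\dagger}\simeq\Omega^{\ast}$ is a stalk in degree $0$; and the five lemma applied to this triangle, using now that $\delta_C$ is an isomorphism, shows $\delta_\Omega$ is an isomorphism. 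Hence $\RHom_{A^{\rm op}}(\Omega^{\ast},A)\simeq\Omega^{\dagger\ddagger}\simeq\Omega$ is a stalk in degree $0$, which forces $\Ext^{>0}_{A^{\rm op}}(\Omega^{\ast},A)=0$; then the degree-$0$ component of $\delta_\Omega$ is the evaluation ${\rm e}_{\Omega,A}\colon\Omega\to\Omega^{\ast\ast}$, which is therefore bijective. By the Auslander--Bridger criterion $\Omega$ is totally reflexive, so $\Gdim_A(C)\leq l<\infty$, completing the proof.

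The main obstacle — the only place requiring genuine care — is the two five-lemma steps: one must verify against the explicit construction of $\delta$ recalled before the statement that $\delta_{(-)}$ is natural and compatible with distinguished triangles, and that whenever the relevant $\Ext$-modules vanish the morphism $\delta_X$ reduces degreewise to the ordinary evaluation homomorphism ${\rm e}_{X,A}$. The remaining ingredients are standard: perfect complexes are self-dual and reflexive; the $A^{\rm op}$-dual of a finitely generated totally reflexive module is totally reflexive; finite Gorenstein dimension passes along distinguished triangles; a complex in $\D^f_b(A)$ of finite Gorenstein dimension admits a bounded resolution by finitely generated totally reflexive modules; and a finitely generated module $C$ has $\Gdim_A(C)\leq l$ as soon as its $l$-th syzygy is totally reflexive.
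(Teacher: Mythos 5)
The paper does not actually prove Lemma \ref{test}: it is quoted from \cite[Theorem 10.4.5]{CFH} (and \cite[Corollary 2.3.8]{Chr00} in the commutative case). Your argument is, in outline, the standard proof of that result, and both implications are sound: the forward direction by showing that the complexes satisfying (1) and (2) form a thick subcategory containing every finitely generated totally reflexive module, and the converse by passing to a high syzygy $\Omega$, dimension-shifting so that $\RHom_A(\Omega,A)$ becomes a stalk, and invoking the Auslander--Bridger criterion. The two five-lemma steps you single out are indeed the only points requiring care, and they do work: $\delta$ is a natural transformation into a composite of contravariant triangulated functors, hence compatible with triangles.

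Two points should be repaired or made explicit. First, the reduction step in the converse is written with the indexing reversed relative to the paper's conventions ($\partial_i\colon P_i\to P_{i-1}$, resolutions bounded below): a homotopy projective resolution of $M\in\D^f_b(A)$ cannot in general be chosen with $P_i=0$ for $i>\sup M$ (that would already make $M$ perfect), and for $n<\inf M$ the truncation $\sigma_{\le n}P$ of a genuine resolution is the zero complex. You should instead truncate at $n>\sup M$, setting $C=\operatorname{Coker}(\partial_{n+1})\cong\operatorname{Im}(\partial_{n})$ and $F=\sigma_{<n}P$; with that correction the triangle $C[n]\to M\to F\to C[n+1]$ and everything downstream is right (your treatment of the module case already uses the correct conventions). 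Second, the forward direction uses that a complex of finite Gorenstein dimension admits a bounded resolution by \emph{finitely generated} totally reflexive modules, whereas the paper's definition of a Gorenstein projective resolution does not require the terms to be finitely generated. For $M\in\D^f_b(A)$ over a noetherian ring the two notions do agree, but this is itself a theorem (it is where results such as those of \cite{Hol} and \cite[Chapter 9]{CFH} enter) and should be cited rather than treated as immediate; the same remark applies to your use of the closure of finite Gorenstein dimension under mapping cones, although in your setting this can be avoided by splicing a bounded totally reflexive resolution of $C$ onto the truncated projective resolution of $M$.
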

\begin{remark}\label{dualfinite}
Let $M$ be a complex in $\D^f_b(A)$. If $\Gdim_A(M)$ is finite, then it follows from Lemma \ref{test} that $\Gdim_{A^{\rm op}}(\RHom_A(M,A))$ is also finite.

\end{remark}

\begin{lemma}\label{coincide}
Keep the same notations as above. Let $\pi\colon Q\xrightarrow\simeq P^\ast$ be a homotopy projective resolution of $P^\ast$ over $A^{\rm op}$. Then
\begin{enumerate}
\item The biduality morphism $\delta_M$ is an isomorphism in $\D(\Z)$ if and only if the map $\pi^\ast\colon P^{\ast\ast}\rightarrow Q^\ast$ is a quasi-isomorphism.

\item Let $X$ be a bounded above complex over $A$. If $X$ is perfect in $\D(A)$, then there is a quasi-isomorphism $$\pi\otimes X\colon Q\otimes_A X\xrightarrow\simeq  P^\ast\otimes_A X.$$
\end{enumerate}
\end{lemma}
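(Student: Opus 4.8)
The plan is to unwind the construction of $\delta_M$ and reduce both statements to standard facts about homotopy projective resolutions and perfect complexes. For part (1), recall that $\delta_M$ is represented by the right fraction $\eta_M/\pi_M$, where $\eta_M\colon P\to\Hom_{A^{\rm op}}(P^\ast,I)$ is the composite of the evaluation isomorphism ${\rm e}_{P,A}\colon P\xrightarrow{\cong}\Hom_{A^{\rm op}}(P^\ast,A)=P^{\ast\ast}$ with $\iota_\ast$, and $\pi_M\colon P\xrightarrow{\simeq}M$. Since ${\rm e}_{P,A}$ is an honest isomorphism of complexes, $\delta_M$ is an isomorphism in $\D(\Z)$ exactly when $\iota_\ast\colon P^{\ast\ast}\to\Hom_{A^{\rm op}}(P^\ast,I)$ is a quasi-isomorphism. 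Now I would compute $\RHom_{A^{\rm op}}(P^\ast,A)$ in two ways: using the homotopy injective resolution $\iota\colon A\xrightarrow{\simeq}I$ it is $\Hom_{A^{\rm op}}(P^\ast,I)$, and using the homotopy projective resolution $\pi\colon Q\xrightarrow{\simeq}P^\ast$ it is $Q^\ast=\Hom_{A^{\rm op}}(Q,A)$. Chasing the comparison square (which commutes up to homotopy by the usual uniqueness of resolutions), the quasi-isomorphism $\iota_\ast\colon P^{\ast\ast}\to\Hom_{A^{\rm op}}(P^\ast,I)$ holds if and only if $\pi^\ast\colon P^{\ast\ast}\to Q^\ast$ is a quasi-isomorphism, since both $\iota_\ast$ and the canonical map $\Hom_{A^{\rm op}}(P^\ast,I)\leftarrow Q^\ast$ (or rather its zigzag) are quasi-isomorphisms that fit into a commuting triangle with $\pi^\ast$. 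Here I use that $P$ is a bounded below complex of finitely generated projectives, so $P^\ast$ is a bounded above complex of finitely generated projective $A^{\rm op}$-modules, hence itself homotopy projective; therefore $\pi$ is a homotopy equivalence and $\pi^\ast$ comparing $P^{\ast\ast}$ with $Q^\ast$ is the natural map to test.

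For part (2), the claim is that $\pi\otimes_A X\colon Q\otimes_A X\to P^\ast\otimes_A X$ is a quasi-isomorphism whenever $X$ is a bounded above perfect complex of $A$-modules. Since $\pi\colon Q\xrightarrow{\simeq}P^\ast$ is a quasi-isomorphism between homotopy projective complexes over $A^{\rm op}$, it is a homotopy equivalence, so $\cone(\pi)$ is a contractible complex of projective $A^{\rm op}$-modules. Thus $\cone(\pi\otimes_A X)\cong\cone(\pi)\otimes_A X$, and it suffices to show that $C\otimes_A X$ is acyclic for $C\colonequals\cone(\pi)$ contractible. I would argue this by the standard thick-subcategory / dévissage argument: the class of complexes $X\in\D(A)$ for which $C\otimes_A^{\rm L} X\simeq 0$ is a thick subcategory of $\D(A)$ containing $A$ itself (because $C\otimes_A A=C$ is contractible, hence acyclic), and the perfect complexes form the smallest such thick subcategory, as recalled in the preliminaries on perfect complexes. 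Alternatively, and more directly, one can induct on the length of the bounded complex of finitely generated projectives representing $X$, using that $C\otimes_A A^{n}$ is contractible and that a mapping cone of a quasi-isomorphism of such tensor products is again a quasi-isomorphism; the boundedness-above hypothesis ensures all the relevant complexes and cones behave well and that $\pi\otimes_A X$ really computes the derived tensor product.

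The main obstacle, I expect, is the bookkeeping in part (1): making precise the claim that the homotopy-commutative comparison diagram relating the injective resolution $I$ of $A$ and the projective resolution $Q$ of $P^\ast$ transports the quasi-isomorphism property from $\iota_\ast$ to $\pi^\ast$. One has to be careful that $\Hom_{A^{\rm op}}(P^\ast,-)$ applied to $\iota$ and $\Hom_{A^{\rm op}}(-,A)$ applied to $\pi$ both land, after identification via $\RHom_{A^{\rm op}}(P^\ast,A)$, on the same object of $\D(\Z)$, and that the resulting triangle genuinely commutes so that ``two out of three are quasi-isomorphisms'' forces the third. This is routine once the derived functors are set up as in the preliminaries, but it is the step that requires the most care. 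Part (2) is comparatively soft, being a direct consequence of the characterization of perfect complexes as the thick closure of $A$ together with the fact that a quasi-isomorphism of homotopy projective complexes is a homotopy equivalence.
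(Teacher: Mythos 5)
There is a genuine gap, and it sits at the single claim your whole argument for part (2) rests on: that $P^\ast$ is homotopy projective because it is a bounded above complex of (finitely generated) projectives, so that $\pi$ is a homotopy equivalence and $\cone(\pi)$ is contractible. This is false in general: it is bounded \emph{below} complexes of projectives that are homotopy projective (bounded above is the correct hypothesis for complexes of \emph{injectives} to be homotopy injective). Since $P$ is bounded below, $P^\ast=\Hom_A(P,A)$ is bounded above, which is exactly the wrong direction; the brutal truncation phenomena over $A=\mathbb Z/4\mathbb Z$ show such complexes need not be homotopy projective. A quick sanity check: if $\cone(\pi)$ were contractible, then $\pi\otimes_A X$ would be a quasi-isomorphism for \emph{every} $X$, making the perfectness hypothesis in part (2) superfluous — but the remark and the $\mathbb Z/4\mathbb Z$ example immediately following the lemma exist precisely to stress that this is not so. Your fallback dévissage does not close the gap either: the class of $X$ with $C\otimes_A^{\rm L}X\simeq 0$ is all of $\D(A)$ (as $C$ is acyclic) and says nothing about the \emph{underived} complex $C\otimes_A X$; and the property ``$\pi\otimes_A X$ is a quasi-isomorphism'' is not invariant under quasi-isomorphism in $X$ (that is the entire content of the lemma), so it does not define a thick subcategory of $\D(A)$ and you cannot pass from a bounded complex of projectives $F$ to a general perfect bounded above $X$ this way. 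Your induction only handles the case $X=F$, which the paper already calls the trivial case.

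The missing idea is tensor evaluation. Choose $\alpha\colon F\xrightarrow{\simeq}X$ with $F$ a bounded complex of finitely generated projectives and use the isomorphisms $\epsilon_{P,F}\colon P^\ast\otimes_A F\cong\Hom_A(P,F)$ and $\epsilon_{P,X}\colon P^\ast\otimes_A X\cong\Hom_A(P,X)$. Since $P$ (not $P^\ast$) is homotopy projective, $\Hom_A(P,\alpha)$ is a quasi-isomorphism; since $F$ is a bounded complex of projectives, $\pi\otimes F$ is a quasi-isomorphism, and $Q\otimes\alpha$ is one because $Q$ is homotopy projective. The resulting commutative diagram forces first $P^\ast\otimes\alpha$ and then $\pi\otimes X$ to be quasi-isomorphisms. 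For part (1) your comparison-square idea is essentially the paper's and is fine, but the justification should be that $\Hom_{A^{\rm op}}(\pi,I)$ is a quasi-isomorphism because $I$ is homotopy injective and $\Hom_{A^{\rm op}}(Q,\iota)$ is one because $Q$ is homotopy projective — not that $\iota_\ast$ is already known to be a quasi-isomorphism (that is what is being tested), and not that $\pi$ is a homotopy equivalence.
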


\begin{proof}
(1) Let $\iota\colon A\xrightarrow \simeq I$ be an injective resolution over $A^{\rm op}$. Since $I$ is homotopy injective and $Q$ is homotopy projective, there is a commutative diagram
$$
\xymatrix{
P^{\ast\ast}\ar[r]^-{\pi^\ast}\ar[d]_-{\iota_\ast}& Q^\ast\ar[d]^-\simeq\\
\Hom_{A^{\rm op}}(P^\ast,I)\ar[r]^-\simeq & \Hom_{A^{\rm op}}(Q,I),
}
$$
where two unlabeled maps are induced by $\iota$ and $\pi$ respectively. Note that $\delta_M$ is an isomorphism in $\D(\Z)$ if and only if $\iota_\ast$ is a quasi-isomorphism. By the above diagram, this is equivalent to that $\pi^\ast$ is a quasi-isomorphism.

(2)
Since $X$ is perfect, by \cite[Theorem 5.1.14]{CFH} and  \cite[Lemma 1.2.1]{Buc}, there is a quasi-isomorphism $\alpha \colon F\xrightarrow \simeq X$  such that $F$ is a bounded complex of finitely generated projective  $A$-modules.
Consider the commutative diagram
$$
\xymatrix{
Q\otimes_A F\ar[r]^-{\pi\otimes F}\ar[d]_-{Q\otimes \alpha}& P^\ast\otimes_A F\ar[r]^-{\epsilon_{P,F}} \ar[d]_-{P^\ast\otimes \alpha} & \Hom_A(P,F)\ar[d]^-{\alpha_\ast}\\
Q\otimes_A X\ar[r]^-{\pi\otimes X} & P^\ast \otimes_A X\ar[r]^-{\epsilon_{P,X}}& \Hom_A(P,X),
}
$$
where $\epsilon_{P, F}$ and $\epsilon_{P,X}$ are the tensor evaluation isomorphisms; see \cite[Theorem 4.5.10]{CFH}.
Note that $\pi\otimes F$ and $Q\otimes \alpha$ are quasi-isomorphisms; see \cite[Proposition 5.8]{Spa}.  It is clear that $\alpha_\ast$ is a quasi-isomorphism. Therefore, the above diagram yields that $P^\ast\otimes \alpha$, and then $\pi\otimes X$, are quasi-isomorphisms.
\end{proof}

If $X$ is a bounded complex of finitely generated projective  $A$-modules,  the functor $-\otimes_A X$ preserves quasi-isomorphisms. However, this does not hold if $X$ is only assumed to be perfect, and therefore Lemma \ref{coincide} (2) is not trivial in general. For example, let $A=\mathbb Z/4\mathbb Z$ and $X=\cdots\xrightarrow {\overline{2}}  \mathbb Z/4\mathbb Z\xrightarrow {\overline{2}}   \mathbb Z/4\mathbb Z  \xrightarrow {\overline{2}}   \mathbb Z/4\mathbb Z \xrightarrow {\overline{2}}\cdots.$ Note that $X$ is acyclic, hence perfect, but $X\otimes_A X$ is not acyclic; indeed, the element of the form $(\cdots, 0,0, \overline{2},0,0,\cdots)$ in each degree of $X\otimes_A X$ is a cycle but not a boundary. This shows that $-\otimes_A X$ does not preserve quasi-isomorphisms.

In what follows, let $R$ be a commutative noetherian ring. The ring $A$ is said to be a {\em finite $R$-algebra} if there is a ring homomorphism $\varphi\colon R\rightarrow A$ such that the image of $\varphi$ is in the center of $A$ and $A$ is finitely generated as an $R$-module.

\begin{proposition}\label{theta}
Let $\varphi\colon R\rightarrow A$ be a ring homomorphism, where $A$ is a finite $R$-algebra and $\Gdim_R(A)$ is finite. Let $R\xrightarrow \simeq I$ be an injective resolution of $R$ and set $D=\Hom_R(A,I)$. Then
\begin{enumerate}
\item $D$ is a semi-dualizing complex over $A$.

\item If $D$ is a perfect complex of  $A$-modules, then for each complex $M\in \D^f_b(A)$,
there is an $A^{\rm op}$-linear quasi-isomorphism
$$\theta_M\colon  Q\otimes_A D\xrightarrow \simeq \Hom_R(P,I),$$
where $P$ is a bounded below complex of finitely generated projective $A$-modules such that $P\xrightarrow \simeq M$ is a homotopy projective resolution of $M$ over $A$, and $\pi\colon Q\xrightarrow \simeq \Hom_A(P,A)$ is a homotopy projective resolution of $\Hom_A(P,A)$ over $A^{\rm op}$.
\end{enumerate}
\end{proposition}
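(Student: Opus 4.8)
The plan is to prove (1) by checking directly the three defining conditions of a semi-dualizing complex for $D=\Hom_R(A,I)$, feeding the hypothesis $\Gdim_R(A)<\infty$ into Lemma \ref{test}, and to prove (2) by writing $\theta_M$ as an explicit composite of the map $\pi\otimes D$ (a quasi-isomorphism by Lemma \ref{coincide}(2)) with a tensor-evaluation isomorphism and a Hom-tensor adjunction isomorphism.

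For (1), choose $I$ so that $I_i=0$ for $i>0$; then $D_i=\Hom_R(A,I_i)$ vanishes for $i>0$, so $D$ is bounded above, and each $D_i=\Hom_R(A,I_i)$ is injective over $A$ and over $A^{\rm op}$: for any injective $R$-module $J$, the adjunction isomorphism $\Hom_A(-,\Hom_R(A,J))\cong\Hom_R(-,J)$ (using on $\Hom_R(A,J)$ the left $A$-structure induced by right multiplication on $A$) shows $\Hom_A(-,\Hom_R(A,J))$ is exact, hence $\Hom_R(A,J)$ is injective over $A$, and the symmetric computation over $A^{\rm op}$ gives injectivity over $A^{\rm op}$. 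Since $I$ is a bounded-above complex of injectives it is homotopy injective, so $D$ represents $\RHom_R(A,R)$ in $\D(R)$; because $\Gdim_R(A)<\infty$, Lemma \ref{test}(1), applied over the commutative ring $R$ to $A\in\D^f_b(R)$, gives $\RHom_R(A,R)\in\D^f_b(R)$, so $\h(D)$ is bounded with finitely generated $R$-homology, and hence with finitely generated homology over $A$ and over $A^{\rm op}$ because $A$ is a finite $R$-algebra. For the homothety morphisms, the two adjunctions above identify $\Hom_{A^{\rm op}}(D,D)$ and $\Hom_A(D,D)$ with $\Hom_R(D,I)$, which (as $I$ is homotopy injective) represents $\RHom_R(\RHom_R(A,R),R)$; under these identifications ${\rm m}_D$ and ${\rm m}^\prime_D$ become the biduality morphism $A\to\RHom_R(\RHom_R(A,R),R)$ of $A$ over $R$, which is an isomorphism by Lemma \ref{test}(2) since $\Gdim_R(A)<\infty$. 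Thus ${\rm m}_D$ and ${\rm m}^\prime_D$ are quasi-isomorphisms, and $D$ is semi-dualizing over $A$.

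For (2), assume $D$ is perfect over $A$. By part (1), $D$ is also bounded above, so Lemma \ref{coincide}(2), applied with $X=D$, gives a quasi-isomorphism $\pi\otimes D\colon Q\otimes_A D\xrightarrow{\simeq}P^\ast\otimes_A D$. Since $P$ is a bounded-below complex of finitely generated projective $A$-modules and $D$ is bounded above, in each total degree only finitely many summands occur, and the tensor-evaluation morphism $\epsilon_{P,D}\colon P^\ast\otimes_A D\to\Hom_A(P,D)$ is an isomorphism of complexes of $A^{\rm op}$-modules (see \cite[Theorem 4.5.10]{CFH}, exactly as for $\epsilon_{P,X}$ in the proof of Lemma \ref{coincide}). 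Finally the Hom-tensor adjunction yields an $A^{\rm op}$-linear isomorphism $\Hom_A(P,D)=\Hom_A(P,\Hom_R(A,I))\cong\Hom_R(A\otimes_A P,I)=\Hom_R(P,I)$. Composing these three maps defines the required $A^{\rm op}$-linear quasi-isomorphism $\theta_M\colon Q\otimes_A D\xrightarrow{\simeq}\Hom_R(P,I)$.

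The step I expect to be the main obstacle is the homothety part of (1): one must carry the two commuting $A$-module structures on $D=\Hom_R(A,I)$ through the adjunction isomorphisms and confirm that the map $A\to\RHom_R(\RHom_R(A,R),R)$ obtained from ${\rm m}_D$ (resp. ${\rm m}^\prime_D$) agrees, in $\D(\Z)$, with the biduality morphism of $A$ over $R$ as defined before Lemma \ref{test}, so that $\Gdim_R(A)<\infty$ can be invoked via Lemma \ref{test}(2); this bookkeeping is routine but must be done carefully. A secondary subtlety in (2), already illustrated after Lemma \ref{coincide} with $A=\Z/4\Z$, is that $-\otimes_A D$ need not preserve quasi-isomorphisms because $D$ is only perfect and not a bounded complex of projectives — this is precisely why the passage from $Q$ to $P^\ast$ must go through Lemma \ref{coincide}(2).
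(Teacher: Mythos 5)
Your proof is correct and follows essentially the same route as the paper: for part (2) it is the identical composite of $\pi\otimes D$ (quasi-isomorphism by Lemma \ref{coincide}(2)), the tensor-evaluation isomorphism $\epsilon_{P,D}$, and the Hom-tensor adjunction. For part (1) the paper merely cites \cite[Theorem 6.1]{Chr01} and asserts that the argument extends to the noncommutative setting; your write-up supplies precisely that extension (injectivity of $\Hom_R(A,J)$ via adjunction, finiteness of $\h(D)$ from Lemma \ref{test}(1), and identification of the homothety maps with the biduality morphism so that Lemma \ref{test}(2) applies), and it is sound.
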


\begin{proof}
(1) For the commutative case, the result is established in \cite[Theorem 6.1]{Chr01}, and the argument extends naturally to the non-commutative setting.

(2) By the adjunction, there is an isomorphism
$$\Hom_A(P,D)= \Hom_A(P, \Hom_R(A, I)) \cong \Hom_R(P, I).$$
Consider the following  $A^{\rm op}$-linear morphisms
$$ Q\otimes_A D  \xrightarrow {\pi\otimes D} \Hom_A(P,A)\otimes_A D\xrightarrow {\epsilon_{P,D}}\Hom_A(P,D),$$
where $\epsilon_{P,D}$ is an isomorphism by the tensor evaluation; see \cite[Theorem 4.5.10]{CFH}. Hence, the required quasi-isomorphism follows immediately from Lemma \ref{coincide}.
\end{proof}

 If, in addition, $A$ is a commutative noetherian local ring, then the assumption of Proposition \ref{theta} (2) implies that $D$ is isomorphic to a suspension of $A$ in $\D^f_b(A)$; see \cite[Proposition 8.3]{Chr01}. However, in general, this is not true; see Example \ref{non-suspension}.
\begin{example}\label{non-suspension}
   Consider the ring homomorphism $\varphi\colon R=\mathbb Z\rightarrow A=  \mathbb Z/2\mathbb Z\times \mathbb Z $ defined by $\varphi(n)=(\overline{n},n)$. Set $D=\Hom_R(A,I)$ as in Proposition \ref{theta}. Since $A$ is regular, $D$ is a perfect complex over $A$. A direct calculation shows that $D\cong \mathbb Z/2\mathbb Z[-1]\oplus \mathbb Z$ in $\D^f_b(A)$. Consequently, $D$ is not isomorphic to a suspension of $A$ in $\D^f_b(A)$.
   \end{example}
\begin{example}\label{eg.dual}
Let $R$ be a commutative noetherian ring of characteristic $2$ and $A=R[x]/(x^2)$. Let $R\xrightarrow \simeq I$ be an injective resolution over $R$. By Proposition \ref{theta}, $\Hom_R(A,I)$ is a semi-dualizing complex over $A$. Moreover, $\Hom_R(A,I)$ is perfect over $A$ since there is an $A$-linear quasi-isomorphism $\Hom_R(A,R)\xrightarrow \simeq \Hom_R(A,I)$ and an isomorphism of $A$-modules
$A\xrightarrow \cong \Hom_R(A,R)$; note that $A$ is isomorphic to the group algebra $RG$ ( $\overline{x}\mapsto g+1$, $G$ is a group $\{1, g\mid g^2=1\}$ of two elements), the isomorphism $A\xrightarrow \cong \Hom_R(A,R)$ now follows from the same argument of \cite[Proposition 4.2.6]{Weibel}.

However, $\Hom_R(A,I)$ cannot be isomorphic to a dualizing complex in $\D^f_b(A)$ if $R$ is not Iwanaga-Gorenstein. Indeed, if $R$ is not Iwanaga-Gorenstein, then Theorem \ref{observation} and Theorem \ref{ascentG} will imply that neither is $A$. This yields that the injective resolution of $A$ cannot be bounded. Since $\Hom_R(A,I)$ is an injective resolution of $A$, we get that $\Hom_R(A,I)$ is not dualizing.
\end{example}

\begin{proposition}\label{quasi-iso}
Let $A$ be a noetherian ring and $D$ be a semi-dualizing complex. Assume $D$ is a perfect complex of  $A$-modules.
For each $M$ in $\D^f_b(A)$ with finite Gorenstein dimension, the evaluation morphism
$$
{\rm e}_{M,D} \colon M\rightarrow \Hom_{A^{{}\rm op}}(\Hom_A(M,D),D)
$$
is an $A$-linear quasi-isomorphism.
\end{proposition}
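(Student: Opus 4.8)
The plan is to reduce the statement to the biduality morphism over $A$ that was already analyzed in Lemma~\ref{coincide}, using the explicit description of $D$ furnished by Proposition~\ref{theta}. First I would fix a homotopy projective resolution $\pi_M\colon P\xrightarrow{\simeq} M$ with $P$ a bounded-below complex of finitely generated projective $A$-modules, and a homotopy projective resolution $\pi\colon Q\xrightarrow{\simeq} P^\ast=\Hom_A(P,A)$ over $A^{\rm op}$. Since $D$ is perfect over $A$, Proposition~\ref{theta} (applied with $M=A$, or directly via the tensor-evaluation identification already used there) gives an $A^{\rm op}$-linear quasi-isomorphism $Q\otimes_A D\xrightarrow{\simeq}\Hom_A(P,D)$, so $\RHom_A(M,D)$ is represented by $Q\otimes_A D$, which lies in $\D^f_b(A^{\rm op})$ because $Q$ may be taken degreewise finitely generated projective and $D$ is perfect (hence bounded with finitely generated homology). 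In particular $\Hom_A(M,D)$ makes sense as an object of $\D^f_b(A^{\rm op})$ and we may compute $\Hom_{A^{\rm op}}(\Hom_A(M,D),D)$ by applying $\Hom_{A^{\rm op}}(-,D)$ to $Q\otimes_A D$.

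Next I would identify $\Hom_{A^{\rm op}}(Q\otimes_A D,D)$. By the standard Hom-tensor adjunction together with the homothety quasi-isomorphism ${\rm m}_D\colon A\xrightarrow{\simeq}\Hom_{A^{\rm op}}(D,D)$ from the definition of a semi-dualizing complex, there is a chain of natural morphisms
$$
\Hom_{A^{\rm op}}(Q\otimes_A D,\,D)\;\cong\;\Hom_A\!\bigl(Q,\,\Hom_{A^{\rm op}}(D,D)\bigr)\;\xleftarrow{\ \simeq\ }\;\Hom_A(Q,A),
$$
where the last map is induced by ${\rm m}_D$ and is a quasi-isomorphism because $Q$ is a bounded-below complex of finitely generated projectives and ${\rm m}_D$ is a quasi-isomorphism between complexes with bounded-above, degreewise-injective (hence, after truncation, well-behaved) targets. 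Here is where I expect to exercise a little care: one must check that $\Hom_A(Q,-)$ preserves the quasi-isomorphism ${\rm m}_D$; the cleanest route is to note that each $D_i$ and each $\Hom_{A^{\rm op}}(D,D)_i$ is injective over $A$ (the latter as a Hom out of injectives into injectives is at worst a bounded complex we can replace by an injective resolution), and that $Q$ consists of finitely generated projectives, so the comparison is a termwise application of the hyper-Hom spectral sequence, or more simply Lemma~\ref{coincide}(2) applied to the perfect complex $D$. The upshot is that $\RHom_{A^{\rm op}}(\RHom_A(M,D),D)$ is represented by $\Hom_A(Q,A)=Q^\ast$.

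Finally I would trace through the maps to see that the evaluation morphism ${\rm e}_{M,D}$ is represented, under these identifications, by the composite $P\xrightarrow{{\rm e}_{P,A}} P^{\ast\ast}\xrightarrow{\pi^\ast} Q^\ast$: the evaluation ${\rm e}_{P,A}$ is an honest isomorphism since $P$ is degreewise finitely generated projective, so ${\rm e}_{M,D}$ is a quasi-isomorphism precisely when $\pi^\ast\colon P^{\ast\ast}\to Q^\ast$ is one. But $M$ has finite Gorenstein dimension, so by Lemma~\ref{test} the biduality morphism $\delta_M$ is an isomorphism in $\D(\Z)$, and by Lemma~\ref{coincide}(1) this is equivalent to $\pi^\ast$ being a quasi-isomorphism. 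Hence ${\rm e}_{M,D}$ is a quasi-isomorphism, and it is $A$-linear since every map in the chain is. The main obstacle, as indicated, is the bookkeeping in the middle step — making sure the adjunction isomorphism and the homothety quasi-isomorphism assemble into a genuine quasi-isomorphism at the level of complexes and that the resulting identification is compatible with ${\rm e}_{M,D}$; everything else is a matter of assembling Proposition~\ref{theta} and Lemma~\ref{coincide}.
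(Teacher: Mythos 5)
Your proposal follows essentially the same route as the paper: reduce $\mathrm{e}_{M,D}$ to the biduality map $\pi^\ast\colon P^{\ast\ast}\to Q^\ast$ via the tensor evaluation $\epsilon_{P,D}$, the quasi-isomorphism $\pi\otimes D$ from Lemma~\ref{coincide}(2), the adjunction, and the homothety quasi-isomorphism $\mathrm{m}_D$, then conclude using Lemma~\ref{coincide}(1) and Lemma~\ref{test}. Two small slips worth fixing: Proposition~\ref{theta} presupposes a finite $R$-algebra, which is not part of the hypotheses here, so one must instead use the direct tensor-evaluation argument (your stated fallback) rather than cite that proposition; and since $Q$ is a complex of $A^{\mathrm{op}}$-modules, the $\Hom_A(Q,-)$ in your middle chain should read $\Hom_{A^{\mathrm{op}}}(Q,-)$.
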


\begin{proof}
One can check directly that ${\rm e}_{M,D}$ is  $A$-linear. Note that $D$ is homotopy injective over $A$ and $A^{\rm op}$.
By taking a projective resolution of $M$, we may assume $M=P$ to be a bounded below complex of finitely generated projective  $A$-modules with finite Gorenstein dimension.

Keep the notations as above. Choose a homotopy projective resolution $\pi\colon Q\xrightarrow \simeq P^\ast$. Since $\Gdim_A(P)<\infty$, the biduality morphism $\delta_P$ is a quasi-isomorphism. It follows from Lemma \ref{coincide} that $\pi^\ast\colon P^{\ast\ast}\rightarrow Q^\ast$ is a quasi-isomorphism. Consider the commutative diagram
$$  \xymatrix{
    P\ar[rr]^-{{\rm e}_{P,D}}\ar[d]^-\cong_-{{\rm e}_{P,A}}& & \Hom_{A^{\rm op}}(\Hom_A(P,D),D)\ar[d]^-{(\epsilon_{P,D})^\ast}\\
    P^{\ast\ast}\ar[dd]_-{\pi^\ast}^-\simeq && \Hom_{A^{\rm op}}(P^\ast\otimes_A D,D)\ar[d]^-{(\pi\otimes D)^\ast}\\
  && \Hom_{A^{\rm op}}(Q\otimes_AD,D)\ar[d]^-\gamma_-\cong\\
    \Hom_{A^{\rm op}}(Q,A)\ar[rr]^-{({\rm m}_D)_\ast}& &\Hom_{A^{\rm op}}(Q,\Hom_{A^{\rm op}}(D,D)),
    }
$$
where the quasi-isomorphism $\pi^\ast$ is from Lemma \ref{coincide}, and $\gamma$ is induced from the adjunction. Since $D$ is homotopy injective and perfect over $A$, it follows from Lemma \ref{coincide} that $(\pi\otimes D)^\ast$ is a quasi-isomorphism, and we infer the isomorphism $({\rm e}_{P,D})^\ast$ by the tensor evaluation; see \cite[Theorem 4.5.10]{CFH}. By assumption, ${\rm m}_D$ is a quasi-isomorphism. Then so is $({\rm m}_D)_\ast$ since $Q$ is homotopy projective. Therefore, the above diagram yields that ${\rm e}_{P,D}$ is a quasi-isomorphism.
\end{proof}

\begin{remark}
If $A$ is a noetherian ring with a dualizing complex $D$, then there is a quasi-isomorphism
${\rm e}_{M,D} \colon M\xrightarrow \simeq \Hom_{A^{\rm op}}(\Hom_A(M,D),D)$ for any complex $M$ in $\D(A)$ with degree-wise finitely generated homology modules; see \cite[Theorem 10.1.23]{CFH}, the commutative case of this result is included in \cite[Chapter V, Proposition 2.1]{Hartshorne}.

\end{remark}




\begin{lemma}\label{projective}
Let $\varphi\colon R\rightarrow A$ be a ring homomorphism, where $R$ is a commutative noetherian ring and $A$ is a finite $R$-algebra. For a finitely generated Gorenstein projective  $A$-module $N$ and a prime ideal $\p$ of $R$, if there exists $n>0$ such that $\Ext^n_{A_\p}(M_\p, N_\p)=0$ for each finitely generated Gorenstein projective  $A$-module $M$, then $N_\p$ is projective over $A_\p$.
\end{lemma}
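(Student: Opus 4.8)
The plan is to work locally at $\p$ from the outset. Write $(S,\n)$ for the commutative noetherian local ring $R_\p$, and $B$ for the $S$-algebra $A_\p$, which is again a finite algebra over $S$. The localization $N_\p$ is a finitely generated $B$-module, and it is Gorenstein projective over $B$: a complete projective resolution of $N$ over $A$ localizes to a complete projective resolution over $B$, since localization is exact and commutes with $\Hom_A(-,A)$ on finitely generated modules (here one uses that $A$ is module-finite over $R$, so finitely generated projective $A$-modules localize to finitely generated projective $B$-modules, and $\Hom_A(P,A)_\p \cong \Hom_B(P_\p,B)$). Thus the hypothesis becomes: there is $n>0$ such that $\Ext^n_B(M',N_\p)=0$ for every finitely generated Gorenstein projective $B$-module $M'$ — note every such $M'$ is of the form $M_\p$ is not needed; rather, since $N_\p$ itself and all its syzygies in a complete resolution are finitely generated Gorenstein projective $B$-modules, and more importantly any finitely generated Gorenstein projective $B$-module arises as a high cosyzygy of something, we get enough test modules. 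The goal is to conclude $N_\p$ is projective over $B$, equivalently (since $B$ is noetherian semilocal) that $\pd_B(N_\p)=0$, equivalently $\Gdim_B(N_\p)=0$ combined with finite projective dimension, but the cleanest route is: a Gorenstein projective module of finite projective dimension is projective.

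The key step is to upgrade the single vanishing $\Ext^n_B(M',N_\p)=0$ into vanishing of $\Ext^1$. Let $M'$ be an arbitrary finitely generated Gorenstein projective $B$-module. Choose a totally acyclic complex of finitely generated projectives whose image in some degree is $M'$; then the $(n-1)$-st cosyzygy $\Omega^{-(n-1)}M'$ — obtained by splicing along the complex to the right — is again finitely generated Gorenstein projective, call it $M''$, and dimension shifting along the exact sequences gives $\Ext^1_B(M'', N_\p)\cong \Ext^n_B(M', N_\p)=0$ (using $\Ext^{>0}_B(\text{proj},N_\p)=0$). Since $M'$ was arbitrary and $M''$ ranges over all finitely generated Gorenstein projective $B$-modules as $M'$ does (the cosyzygy operation is surjective on this class up to the starting module), we obtain $\Ext^1_B(M,N_\p)=0$ for every finitely generated Gorenstein projective $B$-module $M$. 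In particular, applying this with $M$ the first syzygy $\Omega^1_B(N_\p)$ of $N_\p$ — which is finitely generated Gorenstein projective because $N_\p$ is — the short exact sequence $0\to \Omega^1_B(N_\p)\to P_0\to N_\p\to 0$ with $P_0$ finitely generated projective has $\Ext^1_B(N_\p,\Omega^1_B(N_\p))$... wait; rather, one applies the vanishing to split a well-chosen sequence: since $N_\p$ is Gorenstein projective over $B$, there is a short exact sequence $0\to N_\p\to P\to C\to 0$ with $P$ finitely generated projective and $C$ finitely generated Gorenstein projective (the next module to the left in a complete resolution, read appropriately); then $\Ext^1_B(C,N_\p)=0$ forces this sequence to split, so $N_\p$ is a direct summand of the projective module $P$, hence projective over $B$.

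I expect the main obstacle to be bookkeeping rather than conceptual: namely verifying cleanly that localization at $\p$ sends finitely generated Gorenstein projective $A$-modules to finitely generated Gorenstein projective $A_\p$-modules, and that the test-module hypothesis survives localization in usable form (the subtlety being that an arbitrary finitely generated Gorenstein projective $A_\p$-module need not obviously be the localization of one over $A$, so one should instead argue internally over $B=A_\p$ using cosyzygies, as sketched, so that only $N_\p$ itself needs to come from $N$). A secondary point requiring care is the direction of the resolution: "Gorenstein projective" gives a two-sided (complete) resolution, so both syzygies $\Omega^i$ and cosyzygies $\Omega^{-i}$ of $N_\p$ are available and finitely generated Gorenstein projective, which is exactly what makes both the dimension-shift (to pass from $\Ext^n$ to $\Ext^1$) and the final splitting argument go through. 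Once these are in place, the proof is a short diagram-free argument via the standard characterization that a Gorenstein projective module which is a syzygy-summand of a projective is projective.
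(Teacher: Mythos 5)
Your plan is essentially the paper's own argument. The paper takes the co-resolution $0\to N\to P_0\to\cdots\to P_{-(n-1)}\to C\to 0$ extracted from a totally acyclic complex, uses $\Ext^{>0}_A(C,A)=0$ to shift in the \emph{second} variable, $\Ext^n_A(C,N)\cong\Ext^1_A(C,\Ima(\partial_{-(n-2)}))$, localizes, and splits the last short exact sequence so that $C_\p$, hence every $\Ima(\partial_i)_\p$ and finally $N_\p$, is projective; you shift in the \emph{first} variable and split $0\to N_\p\to P\to \Omega^{-1}N_\p\to 0$ directly. The mechanism and the essential test module (the $n$-th cosyzygy of $N$, which is a finitely generated Gorenstein projective $A$-module and therefore covered by the hypothesis after localization) are the same, so this is the same proof up to cosmetic choices.

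Two points in your write-up need repair. First, the dimension-shift isomorphism is written backwards: if $M''=\Omega^{-(n-1)}M'$, then $M'$ is the $(n-1)$-st syzygy of $M''$, so the correct statement is $\Ext^n_B(M'',N_\p)\cong\Ext^1_B(M',N_\p)$; consequently the hypothesis must be applied to the cosyzygy $M''$, not to $M'$. Second, your intermediate claim that $\Ext^1_B(M,N_\p)=0$ for \emph{every} finitely generated Gorenstein projective $B$-module $M$ is not justified: the hypothesis only gives $\Ext^n$-vanishing against modules of the form $M_\p$ with $M$ Gorenstein projective over $A$, and an arbitrary Gorenstein projective $A_\p$-module (or its cosyzygy) need not be of that form. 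Fortunately the general claim is never needed: the only instance used is $M=\Omega^{-1}N_\p$, whose $(n-1)$-st cosyzygy is $(\Omega^{-n}N)_\p$ and hence is the localization of a finitely generated Gorenstein projective $A$-module. Restricting the argument to that single module, as your closing paragraph in effect proposes, yields a correct proof.
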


\begin{proof}
For $N$, there exists a long exact sequence
$$
 0\rightarrow N\rightarrow P_0\xrightarrow {\partial_0}P_{-1}\rightarrow \cdots\xrightarrow{\partial_{-(n-2)}} P_{-(n-1)}\rightarrow C\rightarrow 0,
$$
where $P_i$ is a finitely generated projective  $A$-module for each $i$, and $C$ is a finitely generated Gorenstein projective   $A$-module.

If $n=1$, then the short exact sequence $0\rightarrow N_\p\rightarrow (P_0)_\p\rightarrow C_\p\rightarrow 0$ splits, and $N_\p$ is projective over $A_\p$.
Assume $n>1$.  Applying $\Hom_A(C,-)$ to the above long exact sequence, we have
$$\Ext^n_A(C,N)\cong \Ext^{n-1}_A(C,\Ima(\partial_0))\cong \cdots\cong \Ext^1_A(C,\Ima (\partial_{-(n-2)})).$$
Combining with $\Ext^n_{A_\p}(C_\p, N_\p)=0$, we have $\Ext^1_{A_\p}(C_\p,\Ima (\partial_{-(n-2)})_\p)=0$. 
It follows that $C_\p$ is projective over $A_\p$, and hence all $\Ima (\partial_i)_\p$ and $N_\p$ are projective over $A_\p$.
\end{proof}


Now, we are in a position to state the main result of this section.

\begin{theorem}\label{observation}
Let $\varphi\colon R\rightarrow A$ be a ring homomorphism, where $R$ is a commutative noetherian ring and $A$ is a finite $R$-algebra.
The following two conditions are equivalent$\colon$
\begin{enumerate}
\item\label{basic}  $\Gdim_R(A)<\infty$ and $\RHom_R(A,R)$ is perfect over both $A$ and $A^{\rm op}$.

\item\label{TF} $\varphi$ has ascent and descent of finite Gorenstein dimension property.
\end{enumerate}
\end{theorem}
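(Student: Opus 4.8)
The plan is to route both implications through the complex $D\colonequals\RHom_R(A,R)$, which — once $\Gdim_R(A)$ is known to be finite — is a semi-dualizing complex over $A$ and over $A^{\rm op}$ by Proposition \ref{theta}(1) (realized as $\Hom_R(A,I)$ for an injective resolution $R\xrightarrow{\simeq}I$). The connective tissue is the adjunction isomorphism $\RHom_R(M,R)\simeq\RHom_A(M,\RHom_R(A,R))=\RHom_A(M,D)$ in $\D(A^{\rm op})$, valid for every $M\in\D^f_b(A)$ because $\varphi$ is finite; it identifies $\RHom_R(M,R)\in\D^f_b(R)$ with $\RHom_A(M,D)\in\D^f_b(A^{\rm op})$ (as $A$ is finite over $R$) and carries the $R$-biduality morphism onto the $D$-evaluation ${\rm e}_{M,D}\colon M\to\RHom_{A^{\rm op}}(\RHom_A(M,D),D)$. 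Feeding this into Lemma \ref{test} applied with $R$ in place of $A$ gives, for any finite $\varphi$, the basic translation: for $M\in\D^f_b(A)$ one has $\Gdim_R(M)<\infty$ if and only if $M$ is \emph{$D$-reflexive}, meaning $\RHom_A(M,D)\in\D^f_b(A^{\rm op})$ and ${\rm e}_{M,D}$ is an isomorphism; while by Lemma \ref{test} itself, $\Gdim_A(M)<\infty$ if and only if $M$ is \emph{$A$-reflexive}. Since $\RHom_R(A^{\rm op},R)$ is the same complex $D$, the $A^{\rm op}$-side is symmetric, so condition (2) of the theorem amounts to: $A$-reflexivity and $D$-reflexivity coincide on $\D^f_b(A)$ and on $\D^f_b(A^{\rm op})$.

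For (1) $\Rightarrow$ (2), assume $D$ is perfect over $A$ and $A^{\rm op}$; it is semi-dualizing over both by Proposition \ref{theta}(1). If $M$ is $A$-reflexive, then $\RHom_A(M,D)\simeq\RHom_A(M,A)\otimes^{\mathsf L}_A D\in\D^f_b(A^{\rm op})$ (tensor evaluation, since $D$ is perfect over $A$), and ${\rm e}_{M,D}$ is a quasi-isomorphism by Proposition \ref{quasi-iso}; hence $M$ is $D$-reflexive. Conversely, suppose $M$ is $D$-reflexive. Using the homothety isomorphism $A\simeq\RHom_{A^{\rm op}}(D,D)$ and a swap adjunction,
\[
\RHom_A(M,A)\simeq\RHom_A\bigl(M,\RHom_{A^{\rm op}}(D,D)\bigr)\simeq\RHom_{A^{\rm op}}\bigl(D,\RHom_A(M,D)\bigr),
\]
which lies in $\D^f_b(A^{\rm op})$ because $D$ is perfect over $A^{\rm op}$ and $\RHom_A(M,D)\in\D^f_b(A^{\rm op})$; and applying the same manipulations once more, together with the $A^{\rm op}$-reflexivity of the perfect complex $D$ and the hypothesis that ${\rm e}_{M,D}$ is an isomorphism, one identifies $\RHom_{A^{\rm op}}(\RHom_A(M,A),A)$ with $M$ via the biduality morphism $\delta_M$, so $M$ is $A$-reflexive. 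Running the same computations over $A^{\rm op}$ yields both conditions of Definition \ref{def:ADfGd}.

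For (2) $\Rightarrow$ (1), first apply the descent half of (2) to $A\in\D^f_b(A)$, which has $\Gdim_A(A)=0$, to get $\Gdim_R(A)<\infty$; then $D=\RHom_R(A,R)$ is a well-defined semi-dualizing complex over $A$ and over $A^{\rm op}$ by Proposition \ref{theta}(1), lying in $\D^f_b(A)$ and in $\D^f_b(A^{\rm op})$. By Remark \ref{dualfinite} over $R$, $\Gdim_R(D)<\infty$, so the ascent half of (2) — used once for $\D^f_b(A)$ and once for $\D^f_b(A^{\rm op})$ — gives $\Gdim_A(D)<\infty$ and $\Gdim_{A^{\rm op}}(D)<\infty$. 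It remains to upgrade ``semi-dualizing with finite Gorenstein dimension'' to ``perfect''. For this one localizes at the primes $\p$ of $R$: then $A_\p$ is a finite algebra over the local ring $R_\p$ and $D_\p$ is a semi-dualizing complex over $A_\p$ with finite Gorenstein dimension, which forces $D_\p$ to be, up to suspension, a finitely generated projective $A_\p$-module (see \cite[Proposition 8.3]{Chr01} for the commutative local case); the suspensions are uniformly bounded since $\h(D)$ is bounded, so $\pd_A(D)<\infty$, i.e. $D$ is perfect over $A$. The same argument over $A^{\rm op}$ shows $D$ is perfect over $A^{\rm op}$, giving (1).

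Two points need genuine care. In (1) $\Rightarrow$ (2), one must check that the chain of canonical isomorphisms above really transports $\delta_M$ onto ${\rm e}_{M,D}$ (and, earlier, the $R$-biduality morphism onto ${\rm e}_{M,D}$ under the adjunction): this is the standard — purely formal but fiddly — compatibility of biduality and evaluation maps with tensor–hom adjunction and with the homothety of $D$, and it is complicated here by having to keep track of left versus right $A$-actions on the bimodule $D$ throughout. In (2) $\Rightarrow$ (1), the crux is the implication ``semi-dualizing $+$ finite Gorenstein dimension $\Rightarrow$ perfect''; while it reduces to the local case, in the possibly non-commutative situation at hand it requires the structure of semi-dualizing complexes over a finite algebra over a local ring. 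I expect the first of these to be the principal obstacle to writing the argument cleanly.
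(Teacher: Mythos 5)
Your framing of condition (2) as ``$A$-reflexivity coincides with $D$-reflexivity'' is the right one, and for the implication $(1)\Rightarrow(2)$ your route through the adjunction $\RHom_R(M,R)\simeq\RHom_A(M,D)$, the tensor-evaluation $\RHom_A(M,D)\simeq\RHom_A(M,A)\otimes^{\rm L}_A D$, and the swap adjunction is essentially a reshuffling of the paper's argument, which works through the same $D=\RHom_R(A,R)$ and the same Propositions \ref{theta} and \ref{quasi-iso}. The ``fiddly compatibility'' you flag there — transporting $\delta_M$ onto ${\rm e}_{M,D}$ through the chain of canonical maps — is indeed the part the paper discharges with explicit commutative diagrams (one for each direction), so your concern is well placed but the approach is sound.

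The genuine gap is in $(2)\Rightarrow(1)$, and it is not the secondary worry you make it out to be. To upgrade ``$D$ is semi-dualizing with $\Gdim_A D<\infty$'' to ``$D$ is perfect over $A$'', you propose to localize at $\p\in\Spec R$ and cite \cite[Proposition~8.3]{Chr01} to conclude $D_\p\simeq A_\p[n_\p]$. But that result applies to a finite \emph{local} homomorphism of \emph{commutative} rings; $A_\p$ is a finite $R_\p$-algebra, which is in general neither local nor commutative, so the citation does not apply. Worse, the intermediate claim is false even in the commutative non-local case: the paper's own Example \ref{non-suspension} ($R=\Z$, $A=\Z/2\Z\times\Z$) produces $D\cong\Z/2\Z[-1]\oplus\Z$, and localizing at $\p=(2)$ gives $D_\p\cong\Z/2\Z[-1]\oplus\Z_{(2)}$ over $A_\p\cong\Z/2\Z\times\Z_{(2)}$, which is perfect but is not a suspension of $A_\p$. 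What actually happens in the paper is quite different: one builds an approximation triangle $K[n{-}1]\to P\to D\to K[n]$ in $\D^f_b(A)$ with $P$ perfect and $K$ finitely generated Gorenstein projective over $A$ (where $n=\Gdim_A D$), then for each $\p$ shows $\Ext^{\gg0}_{A_\p}(M_\p,K_\p)=0$ for all $M\in A$-Gproj by bounding Ext against $P_\p$ (perfectness) and against $D_\p$ (using $\Gdim_{R_\p}(M_\p)\le\dim R_\p$ together with the adjunction $\Ext^i_R(M,R)\cong\Ext^i_A(M,D)$), and invokes Lemma \ref{projective} to conclude $K_\p$ is projective. That triangle-plus-$\Ext$-vanishing argument, together with the local-to-global step via \cite[Lemma~4.2]{IK22}, is the genuine content that your sketch is missing and that no straightforward form of the Christensen local argument will supply.
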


\begin{proof}
Set $D=\RHom_R(A,R)$, which can be represented by $\Hom_R(A,I)$, where $I$ is an injective resolution of $R$. In what follows, we identify $\RHom_R(-,R)$ with $\Hom_R(-,I)$.

$(\ref{basic})\Rightarrow (\ref{TF})$. Let $M$ be a complex in $\D^f_b(A)$. In the following, we will show that $M$ has finite Gorenstein projective dimension over $A$ if and only if it has finite Gorenstein projective dimension over $R$. The similar result holds for any complex in $\D^f_b(A^{\rm op})$.

First, assume $\Gdim_A(M)<\infty$. Since $D$ is perfect over $A$, it follows from Lemma \ref{test} that $\RHom_A(M,D)$ has finitely many non-zero homology modules. Then, we infer that $\RHom_R(M,R)$ is in $\D^f_b(R)$ by $\RHom_A(M,D)\cong \RHom_R(M,R)$. Consider the commutative diagram
$$
\xymatrix{
M\ar[r]^-{{\rm e}_{M,I}} \ar[d]_-{{\rm e}_{M,D}} & \Hom_R(\Hom_R(M,I),I)\ar[d]^\cong\\
\Hom_{A^{\rm op}}(\Hom_A(M,D),D)\ar[r]^-\cong & \Hom_{A^{{\rm op}}}(\Hom_R(M,I),D),
}
$$
where the unlabeled isomorphisms are due to  the adjunction $({\rm Res}, \Hom_R(A,-))$.
It follows from Propositions \ref{theta} and \ref{quasi-iso} that ${\rm e}_{M,D}$ is a quasi-isomorphism, and hence so is ${\rm e}_{M,I}$. Combining with $\RHom_R(M,R)\in \D^f_b(R)$,  we conclude by Lemma \ref{test} that $\Gdim_R(M)<\infty$. 

Now, assume $\Gdim_R(M)<\infty$. Since $D$ is a perfect complex over $A^{\rm op}$, one has $\Gdim_R(D\otimes_A ^{\rm L} M)<\infty$. Then,
by Lemma \ref{test} we infer that
$$\RHom_A(M,A)\simeq \RHom_A(M,\RHom_R(D,R)) \simeq\RHom_R(D\otimes_A^{\rm L} M,R)$$
is in $\D^f_b(A^{\rm op})$. Note that the first quasi-isomorphism holds since
$$
{\rm e}_{A,I}\colon A\xrightarrow {\simeq}\Hom_R(\Hom_R(A,I),I)= \RHom_{R}(D,R),
$$
which is due to $\Gdim_R(A)<\infty$, and is also linear over $A$ and $A^{\rm op}$.

For complex $M\in\D^f_b(A)$, there is a quasi-isomorphism $P\xrightarrow \simeq M$, where $P$ is a bounded below complex of finitely generated projective $A$-modules. Let $\pi\colon Q\xrightarrow \simeq \Hom_A(P,A)$ be a homotopy projective resolution. Consider the commutative diagram
$$
\xymatrix{
P\ar[rr]^-{{\rm e}_{P,A}}_-\cong \ar[d]_-{{\rm e}_{P,I}}^-\simeq && \Hom_{A^{\rm op}}(\Hom_A(P,A),A)\ar[d]^-{\pi^\ast}\\
\Hom_R(\Hom_R(P,I),I)\ar[d]_-{(\theta_M)^\ast} & &\Hom_{A^{\rm op}}(Q,A)\ar[d]^-{({\rm e}_{A,I})_\ast} \\
 \Hom_R(Q\otimes_A D,I)\ar[rr]^-\cong && \Hom_{A^{\rm op}}(Q,\Hom_R(D,I)),
}
$$
where the quasi-isomorphism ${\rm e}_{P,I}$ is from Lemma \ref{test}, the morphism $\theta_M$ is from Proposition \ref{theta}, and the unlabeled isomorphism is due to the adjunction $(-\otimes_A D, \Hom_R(D,-))$. Since $Q$ is homotopy projective and ${\rm e}_{A,I}$ is a quasi-isomorphism, one gets that $({\rm e}_{A,I})_\ast$ is also a quasi-isomorphism. It follows immediately from Proposition \ref{theta} that $(\theta_M)^\ast$ is a quasi-isomorphism as $I$ is homotopy injective. Thus, we conclude from the above diagram that $\pi^\ast$ is a quasi-isomorphism. Note that $\RHom_A(M,A)\in \D^f_b(A^{\rm op})$. Hence, $\Gdim_A(M)<\infty$ by Lemmas \ref{test} and \ref{coincide}.

$(\ref{TF})\Rightarrow (\ref{basic})$. Since $\varphi$ has descent of finite Gorenstein dimension property, we have $\Gdim_R(A)<\infty$. This yields that $\Gdim_R(D)<\infty$; see Remark \ref{dualfinite}. Since $\varphi$ has ascent of finite Gorenstein dimension property, $D$ has finite Gorenstein dimension over both $A$ and $A^{\rm op}$. Next, we will show that $D$ is perfect as a complex of  $A$-modules; the same argument holds for $D$ as a complex of $A^{\rm op}$-modules.

Let $\p$ be a prime ideal of $R$, and let $A\text{\rm -Gproj}$ denote the category of finitely generated Gorenstein projective  $A$-modules.  There exists an exact triangle in $\D^f_b(A)$:
\begin{equation}\label{cones}
K[n{\rm -}1]\rightarrow P\rightarrow D\rightarrow K[n], \tag{$\dagger$}
\end{equation}
where $n = \Gdim_{A}(D)$, $P$ is a perfect complex over $A$, and $K$ is finitely generated Gorenstein projective over $A$; see the proof of \cite[Theorem 3.1]{CI} (see also \cite[Chapter 9]{CFH}). For each $M\in A\text{\rm -Gproj}$, since $P$ is perfect, there exists $l>0$ such that $\Ext^{>l}_{A}(M, P)=0$, and hence $\Ext^{>l}_{A_\p}(M_\p,P_\p)=0$.  Set $d=\dim(R_\p)$. By the hypothesis, $\Gdim_R(M)<\infty$, and hence $\Gdim_{R_\p}(M_\p)<\infty$. It follows from \cite[Thoerem 4.13 and Corollary 4.15]{AB} that $\Gdim_{R_\p}(M_\p)\leq d$ and $\Ext^{>d}_{R_\p}(M_\p,R_\p)=0$. This implies $\Ext^{>d}_{A_\p}(M_\p,D_\p)=0$. Applying $\RHom_{A}(M, -)$ to (\ref{cones}), we get an exact triangle
$$
\RHom_A(M,K)[n{\rm -}1]\rightarrow \RHom_A(M,P)\rightarrow \RHom_A(M,D)\rightarrow \RHom_A(M,K)[n]
$$
in $\D(\Z)$. Combining this with $\Ext^{>l}_{A_\p}(M_\p,P_\p)=0=\Ext^{>d}_{A_\p}(M_\p,D_\p)$, we conclude that there exists an integer $m> {\rm max}\{l, d\} +n$ such that $\Ext^{>m}_{A_\p}(M_\p, K_\p)=0$ for each $M\in A\text{\rm -Gproj}$. By Lemma \ref{projective}, $K_\p$ is projective, and hence $D_\p$ is a perfect complex over $A_\p$  by (\ref{cones}). As $\p$ is arbitrary, it follows from \cite[Lemma 4.2]{IK22} that $D$ is a perfect complex over $A$. This completes the proof.
\end{proof}

\begin{remark}\label{question}
Keep the assumptions as Theorem \ref{observation}. 
The condition (\ref{basic}) is equivalent to that $\Gdim_{R_\p}(A_\p)<\infty$ and $\Hom_{R_\p}(A_\p,R_\p)$ is perfect over both $A_\p$ and $(A_{\p})^{\rm op}$ for each prime ideal $\p$ of $R$; see \cite[Corollary 6.3.4]{AIL} and \cite[Proposition III 6.6]{Bass}. By Theorem \ref{observation}, $\varphi\colon R\rightarrow A$ has ascent and descent of finite Gorenstein dimension property if and only if $\varphi_\p$ has ascent and descent of finite Gorenstein dimension property for each prime ideal $\p$ of $R$.
\end{remark}

In \cite[Section 4]{AF}, Avramov and Foxby raised a question:  Let $\varphi\colon R\rightarrow A$ be a finite ring homomorphism of commutative noetherian local rings. For a finitely generated $A$-module $M$, if both $\Gdim_R(A)$ and $\Gdim_A(M)$ are finite, is $\Gdim_R(M)$ finite?

By Theorem \ref{observation}, one can get the following result.

\begin{corollary}\label{application}
Let $\varphi\colon R\rightarrow A$ be a finite ring homomorphism of commutative noetherian rings. Assume $\RHom_R(A,R)$ is perfect over $A$. For a finitely generated $A$-module $M$, if  both $\Gdim_R(A)$ and $\Gdim_A(M)$ are finite, then $\Gdim_R(M)$ is finite.
\end{corollary}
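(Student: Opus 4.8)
The plan is to obtain this as an immediate consequence of Theorem~\ref{observation}. First I would note that, since $R$ and $A$ are commutative, we have $R=R^{\rm op}$ and $A=A^{\rm op}$: a left $A$-module is the same thing as a right $A$-module, $\D(A)=\D(A^{\rm op})$, and a complex is perfect over $A$ precisely when it is perfect over $A^{\rm op}$. Moreover, commutativity of $A$ forces the image of $\varphi$ to lie in the center of $A$, so the finite ring homomorphism $\varphi$ exhibits $A$ as a finite $R$-algebra, which is exactly the setting of Theorem~\ref{observation}.

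Next I would observe that, under these identifications, the two standing hypotheses of the corollary — $\Gdim_R(A)<\infty$ and $\RHom_R(A,R)$ perfect over $A$ — amount precisely to condition~(\ref{basic}) of Theorem~\ref{observation}, the clause about $A^{\rm op}$ being automatic. Theorem~\ref{observation} then yields that $\varphi$ has ascent and descent of finite Gorenstein dimension property. Finally, I would view the finitely generated $A$-module $M$ as a stalk complex concentrated in degree zero, so that $M\in\D^f_b(A)$ and its Gorenstein dimension as a complex coincides with its Gorenstein projective dimension as a module; since $\Gdim_A(M)<\infty$ by hypothesis, the ascent half of Definition~\ref{def:ADfGd}(1) gives $\Gdim_R(M)<\infty$, which is the assertion.

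I do not expect any real obstacle here: all the substance lives in Theorem~\ref{observation}, and the only point needing care is that in the commutative case condition~(\ref{basic}) reduces to the single requirement that $\Gdim_R(A)<\infty$ and that $\RHom_R(A,R)$ be perfect over $A$. As an alternative route one could localize at each prime ideal $\p$ of $R$ via Remark~\ref{question} and then argue over $R_\p\to A_\p$, but this detour is unnecessary since Theorem~\ref{observation} is already formulated over an arbitrary commutative noetherian base ring.
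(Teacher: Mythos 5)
Your proposal is correct and is exactly the paper's (implicit) argument: the corollary is stated there as an immediate consequence of Theorem~\ref{observation}, with the commutativity of $A$ collapsing the $A^{\rm op}$ clauses of condition~(\ref{basic}) just as you describe, and the conclusion following from Definition~\ref{def:ADfGd}(1) applied to $M$ as a stalk complex. The only cosmetic quibble is that the direction $\Gdim_A(M)<\infty\Rightarrow\Gdim_R(M)<\infty$ is the ``descent'' rather than the ``ascent'' half, but since the definition is an equivalence this does not affect the argument.
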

\begin{remark}\label{descentapplication}
    When $\varphi$ is, in addition, a local ring homomorphism, Corollary \ref{application} is not new and can be proved by combining a result of Avramov and Foxby \cite[Lemma 6.5 and Theorem 7.11]{AF} with a result of Christensen \cite[Proposition 8.3]{Chr01}.
\end{remark}

A surjective ring homomorphism $\pi\colon R\rightarrow A$ of commutative noetherian rings is said to be a \emph{complete intersection} if the kernel of $\pi$ is generated by a regular sequence of $R$. Theorem \ref{observation} yields the following well-known result; see \cite[Theorem 2.3.12]{Chr00}.

\begin{corollary}\label{CImap}
Any complete intersection map has ascent and descent of finite Gorenstein dimension property.
\end{corollary}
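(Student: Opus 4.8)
The plan is to verify condition (\ref{basic}) of Theorem \ref{observation} and then quote that theorem. So let $\pi\colon R\to A$ be a complete intersection map of commutative noetherian rings, and fix a regular sequence $\boldsymbol{x}=x_1,\dots,x_c$ of $R$ with $\ker\pi=(\boldsymbol{x})$, so that $A=R/(\boldsymbol{x})$. As a quotient of $R$, the ring $A$ is commutative noetherian and a finite $R$-algebra, so Theorem \ref{observation} applies; since $A$ is commutative, $A^{\rm op}=A$, and it is enough to check that $\Gdim_R(A)<\infty$ and that $\RHom_R(A,R)$ is perfect over $A$. The first of these is immediate: because $\boldsymbol{x}$ is a regular sequence, the Koszul complex $K=K^R(\boldsymbol{x})$ is a finite free resolution of $A$ over $R$, so $\pd_R(A)\le c<\infty$ and hence $\Gdim_R(A)\le\pd_R(A)<\infty$.

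For the second condition, I would represent $\RHom_R(A,R)$ by $\Hom_R(K,R)$ and invoke the self-duality of the Koszul complex on a regular sequence. The perfect pairings $\Lambda^{i}R^{c}\otimes_R\Lambda^{c-i}R^{c}\to\Lambda^{c}R^{c}\cong R$ identify $\Hom_R(K,R)$ with a shift $K[-c]$ of $K$ (up to a sign in the differential), and this identification is $A$-linear for the natural $A$-action on $\RHom_R(A,R)$ coming from the left $A$-module structure on $A$ in the first argument. Since $\boldsymbol{x}$ is regular, $K$ has homology only in degree $0$, equal to $A$; hence $\Ext^i_R(A,R)=0$ for $i\neq c$ and $\Ext^c_R(A,R)\cong A$ as $A$-modules, so $\RHom_R(A,R)$ has homology concentrated in a single degree and is therefore isomorphic in $\D(A)$ to $A[-c]$. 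This is a bounded complex of finitely generated free $A$-modules, hence perfect over $A$ (equivalently over $A^{\rm op}$). Thus condition (\ref{basic}) holds, and Theorem \ref{observation} yields that $\pi$ has ascent and descent of finite Gorenstein dimension property.

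I do not expect any genuine obstacle here: the only nontrivial ingredient is the classical self-duality of the Koszul complex (equivalently, the standard computation of $\Ext^\bullet_R(R/(\boldsymbol{x}),R)$ for a regular sequence), and the rest is bookkeeping with shifts and signs together with the observation that a complex with homology in a single degree is, in the derived category, that homology module placed in that degree. The one small point worth spelling out in the write-up is the $A$-linearity of the duality isomorphism, so that $\Ext^c_R(A,R)\cong A$ is genuinely an isomorphism of $A$-modules and not merely of $R$-modules; once that is in place the conclusion drops out of Theorem \ref{observation}.
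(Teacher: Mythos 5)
Your proposal is correct and follows essentially the same route as the paper: verify condition (1) of Theorem \ref{observation} by using the Koszul resolution to get $\pd_R(A)<\infty$ (hence $\Gdim_R(A)<\infty$) and the self-duality of the Koszul complex to get $\RHom_R(A,R)\simeq A[-c]$, which is perfect over $A$. The paper simply cites \cite[Proposition 1.6.10 and Corollary 1.6.14]{BH} for these two facts rather than sketching the self-duality argument, but the content is identical.
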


\begin{proof}
Let $\pi\colon R\rightarrow A=R/(x_1,\ldots,x_n)$ be a complete intersection map, where $x_1,\ldots,x_n$ is a regular sequence of $R$. By \cite[Proposition 1.6.10 and Corollary 1.6.14]{BH}, the projective dimension of $A$ over $R$ is finite and $\RHom_R(A,R)\simeq A[-n]$. Then, it follows from Theorem \ref{observation} that $\pi$ has ascent and descent of finite Gorenstein dimension property.
\end{proof}

\section{Ascent and descent of Gorenstein projective property}\label{sectionadgp}

In this section, we give a characterization of the ascent and descent of Gorenstein projective property; see Theorem \ref{ADTF}. This characterization yields that the ascent and descent of Gorenstein projective property is a local property;
see Corollary \ref{local property}.

\begin{definition}
Let $\varphi\colon R\rightarrow A$ be a ring homomorphism between noetherian rings.
We say $\varphi$ has \emph{ascent and descent of Gorenstein projective property} if the following two conditions are satisfied$\colon$
\begin{enumerate}
\item For each finitely generated  $A$-module, it is Gorenstein projective over $A$  if and only if it is Gorenstein projective over $R$;

\item For each finitely generated $A^{\rm op}$-module, it is Gorenstein projective over $A^{\rm op}$  if and only if it is Gorenstein projective over $R^{\rm op}$.
\end{enumerate}
\end{definition}

The lemma below follows from  \cite[Theorem 3.1]{CI}; see also \cite[Proposition 9.1.17]{CFH}.
In the commutative case, it is a consequence of \cite[Corollary 2.3.8]{Chr00}.
 
\begin{lemma}\label{TFtest}
Let $A$ be a noetherian ring. If $M$ is a finitely generated  $A$-module with finite Gorenstein dimension and $\Ext^i_A(M,A)=0$ for all $i>0$, then $M$ is Gorenstein projective.
\end{lemma}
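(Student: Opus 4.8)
The plan is to reduce the statement to the characterization of finite Gorenstein dimension in Lemma \ref{test} together with the well-known ``flat/vanishing'' criterion for Gorenstein projectivity of modules. Recall that a finitely generated $A$-module $M$ is Gorenstein projective if and only if $M$ embeds into a projective module as a first syzygy, equivalently, there is an exact complex of projectives that resolves $M$ in both directions and stays exact after $\Hom_A(-,Q)$. The standard fact I want to use is: a finitely generated module of finite Gorenstein dimension with $\Ext^i_A(M,A)=0$ for all $i>0$ has Gorenstein dimension $0$. This is precisely \cite[Theorem 4.13 and Corollary 4.15]{AB} in the commutative case, and the noncommutative version is \cite[Theorem 3.1]{CI} (or \cite[Proposition 9.1.17]{CFH}), both already cited in the excerpt.

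The key steps, in order. First, since $M$ has finite Gorenstein dimension, by Lemma \ref{test} the complex $\RHom_A(M,A)$ lies in $\D^f_b(A^{\mathrm{op}})$ and the biduality morphism $\delta_M\colon M\to \RHom_{A^{\mathrm{op}}}(\RHom_A(M,A),A)$ is an isomorphism. Second, the hypothesis $\Ext^i_A(M,A)=0$ for all $i>0$ says exactly that $\RHom_A(M,A)$ is concentrated in degree $0$, i.e. $\RHom_A(M,A)\simeq M^\ast$ where $M^\ast=\Hom_A(M,A)$ is a finitely generated $A^{\mathrm{op}}$-module. Third, feed this back: $\delta_M$ becomes an isomorphism $M\xrightarrow{\simeq}\RHom_{A^{\mathrm{op}}}(M^\ast,A)$ in $\D(\Z)$, and since the source is a module, the target must also have homology concentrated in degree $0$; reading off the cohomology gives $\Ext^i_{A^{\mathrm{op}}}(M^\ast,A)=0$ for all $i>0$ and that the evaluation map $\mathrm{e}_{M,A}\colon M\to \Hom_{A^{\mathrm{op}}}(\Hom_A(M,A),A)$ is an isomorphism. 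Fourth, invoke the Auslander--Bridger characterization recalled in \ref{gp}: a finitely generated $A$-module $N$ is (totally reflexive, hence) Gorenstein projective precisely when $\Ext^i_A(N,A)=0=\Ext^i_{A^{\mathrm{op}}}(N^\ast,A)$ for all $i>0$ and $\mathrm{e}_{N,A}$ is an isomorphism. We have just verified all three conditions for $N=M$, so $M$ is Gorenstein projective.

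Alternatively, and perhaps more cleanly, one can argue via Gorenstein dimension directly: finite Gorenstein dimension means there is a resolution $G\xrightarrow{\simeq}M$ by finitely generated Gorenstein projectives of finite length, say $0\to G_n\to\cdots\to G_0\to M\to 0$; splitting this into short exact sequences and using that each syzygy $M_i$ satisfies $\Ext^j_A(M_i,A)=\Ext^{j+i}_A(M,A)=0$ for $j>0$ (which follows since the $G$'s have vanishing higher $\Ext$ into $A$, so the long exact sequences shift), one shows by descending induction that if the last syzygy $M_n=G_n$ is Gorenstein projective and $\Ext^{>0}_A(M_{n-1},A)=0$ with $M_{n-1}$ the cokernel of $G_n\to G_{n-1}$, then $M_{n-1}$ is Gorenstein projective (an extension-closure / horseshoe argument), and iterating down gives $M$ Gorenstein projective. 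Either route works; I would present the first, since Lemma \ref{test} is already in hand and does the bookkeeping for us.

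The only mild obstacle is making sure the noncommutative version of ``finite Gorenstein dimension plus $\Ext^{>0}_A(M,A)=0$ implies Gorenstein dimension zero'' is legitimately available rather than only its commutative counterpart. But this is exactly the content the excerpt attributes to \cite[Theorem 3.1]{CI} and \cite[Proposition 9.1.17]{CFH}, which are stated for noetherian (noncommutative) rings, so there is no real difficulty — the proof is essentially a citation assembled from Lemma \ref{test} and \ref{gp}. A second minor point is that one must keep track of the two sides (modules over $A$ vs. $A^{\mathrm{op}}$) since Gorenstein projectivity is defined via the complex $\RHom_A(M,A)$ living over $A^{\mathrm{op}}$; the biduality morphism in Lemma \ref{test} already encodes this two-sided nature correctly, so it is just a matter of reading the statement carefully.
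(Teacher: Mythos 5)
Your proposal is correct; note that the paper gives no proof of this lemma at all, simply citing \cite[Theorem 3.1]{CI} and \cite[Proposition 9.1.17]{CFH}, which is exactly the citation you identify. Your first route — combining Lemma \ref{test} (so that $\RHom_A(M,A)\simeq \Hom_A(M,A)$ is concentrated in degree $0$ and biduality is an isomorphism) with the Auslander--Bridger characterization recalled in \ref{gp} — is a valid self-contained unwinding of that cited result, the only point worth making explicit being that $\h_0(\delta_M)$ identifies with the evaluation map ${\rm e}_{M,A}$, which is immediate from the paper's construction of $\delta_M$ via $\eta_M$.
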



\begin{theorem}\label{ADTF}
Let $\varphi\colon R\rightarrow A$ be a ring homomorphism, where $R$ is a commutative noetherian ring and $A$ is a finite $R$-algebra.
The following two conditions are equivalent$\colon$
\begin{enumerate}
\item  $A$ is Gorenstein projective over $R$ and $\Hom_R(A,R)$ is projective over both $A$ and $A^{\rm op}$.

\item $\varphi$ has ascent and descent of Gorenstein projective property.
\end{enumerate}
Moreover, if $\varphi$ has ascent and descent of Gorenstein projective property, then for each finitely generated $A$-module $M$, $\Gdim_R(M)=\Gdim_A(M)$.
\end{theorem}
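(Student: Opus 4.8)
The plan is to prove the equivalence of (1) and (2) by leaning heavily on Theorem \ref{observation}, which translates "finite Gorenstein dimension ascent/descent" into the condition "$\Gdim_R(A)<\infty$ and $\RHom_R(A,R)$ perfect over $A$ and $A^{\mathrm{op}}$". The key observation is that condition (1) here is the "Gorenstein-projective-strength" version of condition (\ref{basic}) there: $A$ Gorenstein projective over $R$ says $\Gdim_R(A)=0$, and $\Hom_R(A,R)$ being projective (rather than merely perfect) over $A$ and $A^{\mathrm{op}}$ is the degree-zero sharpening of "$\RHom_R(A,R)$ perfect". So throughout I would work with $D=\RHom_R(A,R)$, represented by $\Hom_R(A,I)$ for $I$ an injective resolution of $R$; under (1), since $A$ is $R$-Gorenstein projective one has $\Ext^i_R(A,R)=0$ for $i>0$ and $\Hom_R(A,R)$ finitely generated, so $D\simeq \Hom_R(A,R)$ concentrated in degree $0$, and this module is $\Hom_R(A,R)$, projective over $A$ and $A^{\mathrm{op}}$ by hypothesis.

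For $(1)\Rightarrow(2)$: condition (1) clearly implies condition (\ref{basic}) of Theorem \ref{observation}, so $\varphi$ has ascent and descent of finite Gorenstein dimension property. Now let $M$ be a finitely generated $A$-module. If $M$ is Gorenstein projective over $A$, then $\Gdim_A(M)=0<\infty$, so $\Gdim_R(M)<\infty$ by descent; to upgrade this to $\Gdim_R(M)=0$ I would invoke Lemma \ref{TFtest} over $R$, for which I need $\Ext^i_R(M,R)=0$ for all $i>0$. Here is where $D$ enters: using the adjunction isomorphism $\RHom_R(M,R)\simeq \RHom_A(M,D)$ together with $D=\Hom_R(A,R)$ projective over $A$, and $\Ext^i_A(M,A)=0$ for $i>0$ (since $M$ is $A$-Gorenstein projective), one gets $\Ext^i_R(M,R)=\h_{-i}\RHom_A(M,D)=0$ for $i>0$ because $\Hom_A(M,-)$ applied to a projective $A$-module, for $M$ Gorenstein projective, has no higher cohomology — more precisely $\RHom_A(M,D)$ is represented by $\Hom_A(M,D)$ concentrated in degree $0$ since $D$ is a projective $A$-module and $M$ is Gorenstein projective so $\Ext^{>0}_A(M,\text{proj})=0$. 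Conversely if $M$ is Gorenstein projective over $R$, then $\Gdim_R(M)=0$, so $\Gdim_A(M)<\infty$ by ascent, and I must show $\Ext^i_A(M,A)=0$ for $i>0$ and then apply Lemma \ref{TFtest} over $A$. Using $A\simeq \RHom_R(D,R)$ (valid since $\Gdim_R(A)<\infty$, and here $=0$, so the biduality $\mathrm{e}_{A,I}$ is an isomorphism) one has $\RHom_A(M,A)\simeq \RHom_A(M,\RHom_R(D,R))\simeq \RHom_R(D\otimes^{\mathrm L}_A M,R)$; since $D$ is projective over $A^{\mathrm{op}}$, $D\otimes^{\mathrm L}_A M = D\otimes_A M$ in degree $0$, and since $M$ is $R$-Gorenstein projective with $\Gdim_R=0$ one gets $\Ext^{>0}_R(D\otimes_A M,R)=0$ once one checks $D\otimes_A M$ is again $R$-Gorenstein projective (a projective $A^{\mathrm{op}}$-module tensored with a module that is $R$-Gorenstein projective; one reduces to $D$ being a summand of a finite free $A^{\mathrm{op}}$-module, so $D\otimes_A M$ is a summand of $M^{\oplus k}$, hence $R$-Gorenstein projective). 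This gives $\Ext^{>0}_A(M,A)=0$ and finishes this direction. The same arguments over $A^{\mathrm{op}}$ handle right modules.

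For $(2)\Rightarrow(1)$: apply the hypothesis to the finitely generated $A$-module $A$ itself (Gorenstein projective over $A$), so $A$ is Gorenstein projective over $R$, giving $\Gdim_R(A)=0$ and in particular $\Ext^i_R(A,R)=0$ for $i>0$, so $D\simeq \Hom_R(A,R)$ in degree $0$. It remains to see $\Hom_R(A,R)$ is projective over $A$ and $A^{\mathrm{op}}$. Since ascent and descent of Gorenstein projective property in particular implies ascent and descent of finite Gorenstein dimension property (one can deduce this directly, or observe that every finitely generated $A$-module of finite $\Gdim$ sits in a short exact sequence over $R$ that reduces to the Gorenstein-projective case after enough syzygies, exactly as in Lemma \ref{TFtest}/the proof of Theorem \ref{observation}), Theorem \ref{observation} gives that $D$ is perfect over $A$ and $A^{\mathrm{op}}$. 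A perfect complex over $A$ that is isomorphic in $\D^f_b(A)$ to a module concentrated in degree $0$ — namely $\Hom_R(A,R)$ — is a module of finite projective dimension; but moreover, since $\varphi$ has descent of Gorenstein projective property and $\Gdim_R(\Hom_R(A,R))=0$ (it is $R$-Gorenstein projective, being $\Hom_R$ of an $R$-Gorenstein projective module — or, directly, a summand of $\Hom_R(R^{\oplus k},R)=R^{\oplus k}$ after choosing a presentation, using again that $A$ is $R$-Gorenstein projective), ascent gives $\Gdim_A(\Hom_R(A,R))=0$; a module that is both of finite projective dimension and Gorenstein projective over $A$ is projective. The same over $A^{\mathrm{op}}$. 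I expect the main obstacle to be the bookkeeping in $(2)\Rightarrow(1)$ around showing $\Hom_R(A,R)$ is $R$-Gorenstein projective and assembling "perfect + Gorenstein projective $\Rightarrow$ projective" cleanly; the derived-category adjunction manipulations in $(1)\Rightarrow(2)$ are routine once one notes $D$ is a genuine projective module in degree $0$.

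Finally, for the equality $\Gdim_R(M)=\Gdim_A(M)$ when $\varphi$ has ascent and descent of Gorenstein projective property: both quantities are simultaneously finite (by the finite-Gorenstein-dimension ascent/descent, which (2) entails) or simultaneously infinite, so assume both finite, say $\Gdim_A(M)=n$. Take a projective resolution of $M$ over $A$ and let $\Omega^n_A M$ be the $n$-th syzygy; then $\Omega^n_A M$ is Gorenstein projective over $A$, hence over $R$ by descent, so $\Gdim_R(M)\le n$. Conversely, with $\Gdim_R(M)=m$, the syzygy $\Omega^m_A M$ (taken over $A$) restricts to $\Omega^m_R M$ up to projective summands — more carefully, since $A$ is projective over $R$ (it is $R$-Gorenstein projective and, as shown, $\Hom_R(A,R)$ projective forces... actually $A$ itself need only be $R$-Gorenstein projective) one must argue at the level of Gorenstein dimension: $\Gdim_R(\Omega^m_A M) = \Gdim_R(M)-m = 0$ when $\Omega^m_A M$ is taken over $A$ and restricted, using that syzygies over $A$ are, over $R$, direct sums of syzygies over $R$ with an $R$-projective (hence $R$-Gorenstein-projective-dimension-zero) module, and Gorenstein dimension drops exactly by one along syzygies until it hits zero. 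Hence $\Omega^m_A M$ is $R$-Gorenstein projective, so $A$-Gorenstein projective by ascent, giving $\Gdim_A(M)\le m$. Combining, $\Gdim_A(M)=\Gdim_R(M)$.
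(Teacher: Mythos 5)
Your proof of $(1)\Rightarrow(2)$ matches the paper's argument essentially step by step: invoke Theorem \ref{observation}, then use Lemma \ref{TFtest} together with the two adjunction computations $\Ext^i_R(M,R)\cong\Ext^i_A(M,\Hom_R(A,R))$ (using that $\Hom_R(A,R)$ is $A$-projective) and $\Ext^i_A(M,A)\cong\Ext^i_R(\Hom_R(A,R)\otimes_A M,R)$ (using that $\Hom_R(A,R)$ is $A^{\mathrm{op}}$-projective, so $\Hom_R(A,R)\otimes_A M$ is a summand of $M^{\oplus k}$ and hence $R$-Gorenstein projective). The ``moreover'' part the paper simply calls straightforward; your syzygy argument, while a bit muddled in phrasing (the claim that $A$-syzygies restrict to $R$-syzygies up to projectives is false in general and you correctly retreat to the Gorenstein-dimension-of-syzygies argument), captures the right idea once one notes that finitely generated projective $A$-modules are $R$-Gorenstein projective.

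For $(2)\Rightarrow(1)$ you take a genuinely different route. You first establish directly, via the syzygy argument, that ascent/descent of the Gorenstein projective property implies ascent/descent of finite Gorenstein dimension, then invoke the hard direction $(2)\Rightarrow(1)$ of Theorem \ref{observation} to conclude that $D\simeq\Hom_R(A,R)$ is perfect, hence of finite projective dimension over $A$ and $A^{\mathrm{op}}$; combined with the ascent giving $\Gdim_A(\Hom_R(A,R))=0$, the standard fact that a Gorenstein projective module of finite projective dimension is projective finishes the argument. This is correct and not circular, since Theorem \ref{observation} is proved independently. But the paper's own $(2)\Rightarrow(1)$ is considerably more elementary: after observing that $\Hom_R(A,R)$ is $A$-Gorenstein projective, it embeds $\Hom_R(A,R)$ in a finitely generated projective $A$-module $F$ with Gorenstein projective cokernel $C$, uses descent to see $C$ is $R$-Gorenstein projective, and then computes $\Ext^1_A(C,\Hom_R(A,R))\cong\Ext^1_R(C,R)=0$ by adjunction to split the sequence, so $\Hom_R(A,R)$ is a summand of $F$. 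This bypasses entirely the localization machinery (Lemma \ref{projective}, \cite[Lemma 4.2]{IK22}, the triangle argument) that underlies Theorem \ref{observation} $(2)\Rightarrow(1)$. Your route is valid but costlier; if you want a self-contained and lighter proof of the implication, the split-exact-sequence trick is the one to internalize.
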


\begin{proof}
The equality $\Gdim_R(M)=\Gdim_A(M)$ is straightforward under the given assumption. We only need to prove the first statement.

$(1)\Rightarrow (2)$. It follows from Theorem \ref{observation} that $\varphi$ has ascent and descent of finite Gorenstein dimension property. It suffices to prove that for any finitely generated $A$-module $M$, $\Gdim_A(M)=0$ if and only if $\Gdim_R(M)=0$.
By an analogous argument, the assertion for finitely generated $A^{\rm op}$-modules also holds.

Assume $\Gdim_A(M)=0$. Since $\varphi$ has descent of finite Gorenstein dimension property, $\Gdim_R(M)<\infty$. Moreover, by the hypothesis that $\Hom_R(A,R)$ is a projective  $A$-module, we infer that for all $i>0$,
$$\Ext_R^i(M,R)\cong \Ext_A^i(M,\Hom_R(A,R)) = 0.$$
It follows immediately from Lemma \ref{TFtest} that $\Gdim_R(M)=0$.

Now, assume $\Gdim_R(M)=0$. Since $\Hom_R(A,R)$ is projective over $A^{\rm op}$ and $M$ is a Gorenstein projective $R$-module, we infer that $\Hom_R(A,R)\otimes_A M$ is also a Gorenstein projective $R$-module. Moreover, for any $i>0$ we conclude that
\begin{align*}
  \Ext^i_A(M, A)&\cong \Ext^i_A(M,\Hom_R(\Hom_R(A,R),R))\\
    &\cong \Ext_R^i(\Hom_R(A,R)\otimes_A M, R) = 0
\end{align*}
Note that $\Gdim_A(M)<\infty$ as $\varphi$ has ascent of finite Gorenstein dimension property. Then, it follows from Lemma \ref{TFtest} that $\Gdim_A(M)=0$. 

$(2)\Rightarrow (1)$. Since $\varphi$ has descent of Gorenstein projective property, $A$ is Gorenstein projective as an $R$-module. This yields that $\Hom_R(A,R)$ is also a Gorenstein projective $R$-module, and then $\Hom_R(A,R)$ is Gorenstein projective over both $A$ and $A^{\rm op}$ as $\varphi$ has ascent of Gorenstein projective property. There is a short exact sequence
$$0\rightarrow \Hom_R(A,R)\rightarrow F\rightarrow C\rightarrow 0$$
in $A\mbox{-mod}$, where $F$ is projective and $C$ is Gorenstein projective. Then the hypothesis yields that $C$ is also Gorenstein projective over $R$. We infer that the sequence is split from
$$\Ext^1_A(C,\Hom_R(A,R)) \cong  \Ext^1_R(C,R) = 0;$$
the isomorphism here follows from $\RHom_R(A,R)\simeq \Hom_R(A,R)$ and the adjunction $({\rm Res}, \RHom_R(A,-))$.
Hence, $\Hom_R(A,R)$ is a projective  $A$-module. The same argument will show that $\Hom_R(A,R)$ is a projective $A^{\rm op}$-module.
This completes the proof.
\end{proof}

From the above result and $(1)\Rightarrow (2)$ in Theorem \ref{observation}, there is a natural observation: if $\varphi$ has ascent and descent of Gorenstein projective property, then $\varphi$ has ascent and descent of finite Gorenstein dimension property.

Following \cite[Definition 3.1.18]{BH}, a commutative noetherian ring $A$ is said to be \emph{Gorenstein} provided that $A_\p$ is Iwanaga-Gorenstein for each prime ideal $\p$ of $A$.  A commutative Gorenstein ring need not be Iwanaga-Gorenstein. However, a commutative Gorenstein ring with finite Krull dimension must be Iwanaga-Gorenstein; see for example \cite[Theorem 4.1.1]{Buc}. The following is immediate from \cite[Theorem 3.3.7]{BH} and Theorem \ref{ADTF}.

\begin{corollary}
Let $\varphi\colon (R, \m)\rightarrow (A, \n)$ be a finite local homomorphism of Gorenstein local rings with the same Krull dimension. Then $\varphi$ has ascent and descent of Gorenstein projective property.
\end{corollary}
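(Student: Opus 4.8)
The plan is to deduce this from Theorem~\ref{ADTF}: since $A$ is commutative we have $A=A^{\rm op}$, so it suffices to prove that $A$ is Gorenstein projective over $R$ and that $\Hom_R(A,R)$ is a projective $A$-module. The two ingredients I would use are the behaviour of canonical modules along a finite local homomorphism, namely \cite[Theorem~3.3.7]{BH}, and the criterion for vanishing Gorenstein dimension in Lemma~\ref{TFtest}.

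First I would note that a Gorenstein local ring has finite Krull dimension and is Iwanaga-Gorenstein, so \ref{def of I-Gorenstein} gives $\Gdim_R(A)<\infty$. Since $A$ is Gorenstein it is in particular Cohen--Macaulay, and $\dim A=\dim R$ by hypothesis, while $R$ being Gorenstein has canonical module $\omega_R\cong R$. Applying \cite[Theorem~3.3.7]{BH} with $c=\dim R-\dim A=0$ then yields $\Ext^i_R(A,R)=\Ext^i_R(A,\omega_R)=0$ for all $i>0$, together with an isomorphism $\Hom_R(A,R)=\Hom_R(A,\omega_R)\cong\omega_A$ with a canonical module of $A$.

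Granting these, the rest is immediate. Because $\Gdim_R(A)<\infty$ and $\Ext^i_R(A,R)=0$ for $i>0$, Lemma~\ref{TFtest} gives $\Gdim_R(A)=0$, that is, $A$ is Gorenstein projective over $R$. Because $A$ is itself a Gorenstein local ring, its canonical module is isomorphic to $A$, so $\Hom_R(A,R)\cong\omega_A\cong A$ is free, hence projective, over $A=A^{\rm op}$. Thus condition~(1) of Theorem~\ref{ADTF} holds, and $\varphi$ has ascent and descent of Gorenstein projective property.

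I do not anticipate a real obstacle here; the only step needing care is the transfer of Cohen--Macaulayness and of the canonical module from the ring $A$ to the $R$-module $A$ along $\varphi$, which is exactly the content of \cite[Theorem~3.3.7]{BH} once one observes that $A$ is a finite $R$-module with $\dim_R A=\dim A=\dim R$ and that $\omega_R\cong R$. As an alternative to invoking the $\Ext$-vanishing, one could obtain $\Gdim_R(A)=0$ directly from the Auslander--Bridger formula \cite[Theorem~4.13]{AB}, using that $\sqrt{\m A}=\n$ forces $\depth_R A=\depth_A A=\dim A=\dim R=\depth R$.
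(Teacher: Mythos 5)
Your proof is correct and follows the paper's intended route exactly: the paper states that the corollary is ``immediate from \cite[Theorem 3.3.7]{BH} and Theorem \ref{ADTF},'' and you have simply made that deduction explicit. Using $c=\dim R-\dim A=0$ and $\omega_R\cong R$ in \cite[Theorem 3.3.7]{BH} to get $\Ext^{>0}_R(A,R)=0$ and $\Hom_R(A,R)\cong\omega_A\cong A$, and then feeding these into Lemma \ref{TFtest} and Theorem \ref{ADTF}, is precisely the argument the authors had in mind.
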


Next, we prove that the ascent and descent of Gorenstein projective property is a local property; see Corollary \ref{local property}. Before that, we need the following result, which can be proved by using the characterization of finitely generated Gorenstein projective modules; see \ref{gp}.

\begin{lemma}\label{local GP}
Let $R$ be a commutative noetherian ring and $A$ a finite $R$-algebra. For any finitely generated  $A$-module $M$, the following are equivalent$\colon$
\begin{enumerate}
\item $M$ is Gorenstein projective over $A$.

\item $M_\p$ is Gorenstein projective over $A_\p$ for each prime ideal $\p$ of $R$.

\item $M_\m$ is Gorenstein projective over $A_\m$ for each maximal ideal $\m$ of $R$.
\end{enumerate}
\end{lemma}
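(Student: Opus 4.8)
The plan is to prove this by reducing everything to the classical characterization of finitely generated Gorenstein projective modules recalled in \ref{gp}, namely that $M$ is Gorenstein projective over $A$ if and only if $\Ext^i_A(M,A)=0=\Ext^i_{A^{\rm op}}(\Hom_A(M,A),A)$ for all $i>0$ and the evaluation map $\mathrm{e}_{M,A}\colon M\to\Hom_{A^{\rm op}}(\Hom_A(M,A),A)$ is an isomorphism. The key point is that each of these three conditions is detected locally on $\Spec R$, because $A$ is a finite $R$-algebra and $R$ is commutative noetherian.

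First I would establish the implication $(1)\Rightarrow(2)$, which is the routine direction: if $M$ is a finitely generated Gorenstein projective $A$-module witnessed by a totally acyclic complex $\mathbf{P}$ of finitely generated projective $A$-modules with $M\cong\Ima(\partial_0)$, then localizing at any prime $\p$ of $R$ gives a complex $\mathbf{P}_\p$ of finitely generated projective $A_\p$-modules; it remains acyclic since localization is exact, and $\Hom_{A_\p}(\mathbf{P}_\p,Q')$ is acyclic for every finitely generated projective $A_\p$-module $Q'$ because such $Q'$ is a direct summand of $(A_\p)^n$ and $\Hom_{A_\p}((P_i)_\p, (A_\p)^n)\cong\Hom_A(P_i,A^n)_\p$ (using that the $P_i$ are finitely presented over the noetherian ring $A$), so $\Hom_{A_\p}(\mathbf{P}_\p,(A_\p)^n)\cong\Hom_A(\mathbf{P},A^n)_\p$ is acyclic. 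Hence $\mathbf{P}_\p$ is totally acyclic over $A_\p$ and $M_\p\cong\Ima(\partial_0)_\p$ is Gorenstein projective over $A_\p$. The implication $(2)\Rightarrow(3)$ is trivial since maximal ideals are prime.

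The substantive direction is $(3)\Rightarrow(1)$. Here I would argue as follows. First, since $M$ is finitely generated over the noetherian ring $A$, each $\Ext^i_A(M,A)$ is a finitely generated $A$-module, hence a finitely generated $R$-module, and $\Ext^i_A(M,A)_\m\cong\Ext^i_{A_\m}(M_\m,A_\m)$ for every maximal ideal $\m$ of $R$. Assumption $(3)$ gives $\Ext^i_{A_\m}(M_\m,A_\m)=0$ for all $i>0$ and all maximal $\m$, so $\Ext^i_A(M,A)=0$ for all $i>0$ by the standard local-global principle for finitely generated modules. Next, set $N=\Hom_A(M,A)$, a finitely generated $A^{\rm op}$-module; since $M$ is finitely presented, $N_\m\cong\Hom_{A_\m}(M_\m,A_\m)$, which is Gorenstein projective over $A_\m$ for each maximal $\m$ because $M_\m$ is (the dual of a totally acyclic complex is totally acyclic). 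Applying the same $\Ext$-vanishing argument over $A^{\rm op}$ yields $\Ext^i_{A^{\rm op}}(N,A)=0$ for all $i>0$. Finally, the evaluation homomorphism $\mathrm{e}_{M,A}\colon M\to\Hom_{A^{\rm op}}(N,A)$ is a map of finitely generated $A$-modules whose localization at each maximal $\m$ is $\mathrm{e}_{M_\m,A_\m}$ (again using finite presentation to commute $\Hom$ with localization), which is an isomorphism by the characterization in \ref{gp} applied over $A_\m$. A homomorphism of finitely generated modules over the noetherian ring $A$ that is an isomorphism after localizing at every maximal ideal of $R$ is an isomorphism: its kernel and cokernel are finitely generated $A$-modules, hence finitely generated $R$-modules, that vanish at every maximal ideal of $R$, hence vanish. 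Therefore $M$ satisfies all three conditions of \ref{gp} and is Gorenstein projective over $A$.

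The main obstacle to watch is the commutation of $\Hom$ and of the formation of totally acyclic complexes with localization at primes of $R$ (rather than of $A$): this is where finiteness of the $R$-algebra $A$ and finite presentation of the modules involved are used, so I would be careful to invoke that $A$ is noetherian and all modules are finitely generated each time $\Hom_A(-,-)_\p\cong\Hom_{A_\p}((-)_\p,(-)_\p)$ or $\Ext^i_A(-,-)_\p\cong\Ext^i_{A_\p}((-)_\p,(-)_\p)$ is used. Everything else is a direct translation through the $\Ext$-and-evaluation criterion for total reflexivity together with the elementary fact that vanishing and isomorphism of finitely generated $R$-modules can be checked at maximal ideals.
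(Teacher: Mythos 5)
Your proof is correct and follows exactly the route the paper indicates: the lemma is stated without proof but with the remark that it ``can be proved by using the characterization of finitely generated Gorenstein projective modules; see \ref{gp}'', which is precisely the Ext-vanishing and biduality criterion you reduce to, combined with the standard facts that $\Ext$ and $\Hom$ of finitely generated modules over a noetherian algebra commute with localization and that vanishing/isomorphism of finitely generated $R$-modules is detected at maximal ideals. Your direct $(1)\Rightarrow(2)$ argument via localizing a totally acyclic complex of finitely generated projectives is a fine alternative to checking the \ref{gp} criterion over $A_\p$, since for a complex of finitely generated projectives total acyclicity is equivalent to acyclicity of $\Hom_A(-,A)$.
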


If we replace ``Gorenstein projective'' in Lemma \ref{local GP} with ``projective'', the above statement still holds.
Combining this with Theorem \ref{ADTF} and Lemma \ref{local GP}, we obtain that the ascent and descent of Gorenstein projective property is a local property.

\begin{corollary}\label{local property}
Let $\varphi\colon R\rightarrow A$ be a ring homomorphism, where $R$ is a commutative noetherian ring and $A$ is a finite $R$-algebra. The following are equivalent$\colon$
\begin{enumerate}
    \item\label{ad} $\varphi\colon R\rightarrow A$ has ascent and descent of Gorenstein projective property.

    \item\label{adp} $\varphi_\p\colon R_\p\rightarrow A_\p$ has ascent and descent of Gorenstein projective property for each prime ideal $\p$ of $R$.

      \item\label{adm} $\varphi_\m\colon R_\m\rightarrow A_\m$ has ascent and descent of Gorenstein projective property for each maximal ideal $\m$ of $R$.
\end{enumerate}
\end{corollary}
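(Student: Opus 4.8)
The plan is to reduce all three conditions to the structural criterion furnished by Theorem \ref{ADTF} and then localize it term by term. For each prime ideal $\p$ of $R$, the ring $A_\p$ is a finite $R_\p$-algebra via $\varphi_\p$ (the image of $\varphi_\p$ stays central and $A_\p$ is finitely generated over $R_\p$), so Theorem \ref{ADTF} applies to $\varphi_\p$ and tells us that $\varphi_\p$ has ascent and descent of Gorenstein projective property if and only if $A_\p$ is Gorenstein projective over $R_\p$ and $\Hom_{R_\p}(A_\p,R_\p)$ is projective over both $A_\p$ and $(A_\p)^{\rm op}$. Likewise, $\varphi$ itself has the property if and only if $A$ is Gorenstein projective over $R$ and $\Hom_R(A,R)$ is projective over $A$ and $A^{\rm op}$. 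Thus it suffices to show that each of these two module-theoretic conditions for $\varphi$ is equivalent to the corresponding condition holding for $\varphi_\p$ for every prime (resp. maximal) ideal $\p$ of $R$.

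The first ingredient I would record is the identification $\Hom_R(A,R)_\p\cong\Hom_{R_\p}(A_\p,R_\p)$ as $A_\p$-modules, which holds because $A$ is a finitely presented $R$-module ($R$ being noetherian), together with $(A_\p)^{\rm op}=(A^{\rm op})_\p$; this makes the localization of the $A$- and $A^{\rm op}$-module $\Hom_R(A,R)$ literally equal to the module $\Hom_{R_\p}(A_\p,R_\p)$ that appears in Theorem \ref{ADTF} for $\varphi_\p$, with matching left and right module structures. The second ingredient is a twofold application of Lemma \ref{local GP}: applied to the finite $R$-algebra $R$ itself and the finitely generated $R$-module $A$, it gives that $A$ is Gorenstein projective over $R$ if and only if $A_\p$ is Gorenstein projective over $R_\p$ for every prime $\p$ of $R$, if and only if $A_\m$ is Gorenstein projective over $R_\m$ for every maximal ideal $\m$; and applied to the finite $R$-algebra $A$ (resp. $A^{\rm op}$) and the finitely generated module $\Hom_R(A,R)$, in the ``projective'' variant noted immediately after Lemma \ref{local GP}, it gives that $\Hom_R(A,R)$ is projective over $A$ (resp. $A^{\rm op}$) if and only if $\Hom_{R_\p}(A_\p,R_\p)$ is projective over $A_\p$ (resp. $(A_\p)^{\rm op}$) for every prime $\p$, if and only if the same holds for every maximal ideal.

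Assembling these yields the chain $(\ref{ad})\Leftrightarrow(\ref{adp})\Leftrightarrow(\ref{adm})$: by Theorem \ref{ADTF}, $(\ref{ad})$ is equivalent to ``$A$ Gorenstein projective over $R$ and $\Hom_R(A,R)$ projective over $A$ and $A^{\rm op}$'', which by the previous paragraph is equivalent to ``for every prime $\p$ of $R$, $A_\p$ is Gorenstein projective over $R_\p$ and $\Hom_{R_\p}(A_\p,R_\p)$ is projective over $A_\p$ and $(A_\p)^{\rm op}$'', and reading Theorem \ref{ADTF} backwards for each $\varphi_\p$ this is exactly $(\ref{adp})$; the implication $(\ref{adp})\Rightarrow(\ref{adm})$ is trivial since maximal ideals are prime, and $(\ref{adm})\Rightarrow(\ref{ad})$ uses the maximal-ideal clauses of Lemma \ref{local GP} in the identical way. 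The only point requiring genuine care is the commutation of $\Hom_R(A,-)$ with localization and the consequent identification of the left/right module structures on the dual; once that bookkeeping is in place the corollary is a formal consequence of Theorem \ref{ADTF} and Lemma \ref{local GP}, and I do not anticipate a real obstacle beyond invoking Lemma \ref{local GP} with the correct choice of ambient finite algebra in each of its two uses.
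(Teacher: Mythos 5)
Your argument is correct and is essentially the paper's proof: the paper likewise derives this corollary by combining Theorem \ref{ADTF} (which converts the ascent/descent property into the pair of conditions ``$A$ Gorenstein projective over $R$'' and ``$\Hom_R(A,R)$ projective over $A$ and $A^{\rm op}$'') with Lemma \ref{local GP} and its projective variant to localize those conditions. You have merely made explicit the routine bookkeeping — notably the identification $\Hom_R(A,R)_\p\cong\Hom_{R_\p}(A_\p,R_\p)$ and the two distinct choices of ambient finite $R$-algebra when invoking Lemma \ref{local GP} — that the paper leaves to the reader.
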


The notion of Frobenius extension of rings is a generalization of Frobenius algebra \cite{Kas}, which includes many interesting examples. Following \cite[Theorem 1.2]{Kad},  a ring homomorphism $S\rightarrow A$ is called a \emph{Frobenius extension} if the following equivalent conditions hold:

(1) $A$ is a finitely generated projective  $S$-module and there is an isomorphism $A\cong \Hom_{S}(A,S)$ over both $A$ and $S^{\rm op}$.

(2) $A$ is a finitely generated projective $S^{\rm op}$-module and there is an isomorphism $A\cong \Hom_{S^{\rm op}}(A,S)$ over both $S$ and $A^{\rm op}$.

Using Theorem \ref{ADTF}, one can immediately get the following. Indeed, a stronger result that any Frobenius extension has ascent and descent of Gorenstein projective property was established in \cite{Chen, Ren, Zhao}. However, as shown in Example \ref{converseFro}, the converse does not hold in general.

\begin{corollary}\label{motivation}
Let $\varphi\colon R\rightarrow A$ be a ring homomorphism, where $R$ is a commutative noetherian ring and $A$ is a finite $R$-algebra. If it is a Frobenius extension, then $\varphi$ has ascent and descent of Gorenstein projective property.
\end{corollary}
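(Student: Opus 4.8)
The plan is to derive Corollary \ref{motivation} directly from Theorem \ref{ADTF} by checking that a Frobenius extension $\varphi\colon R\to A$ satisfies condition (1) of that theorem, namely that $A$ is Gorenstein projective over $R$ and that $\Hom_R(A,R)$ is projective over both $A$ and $A^{\rm op}$. Since $R$ is commutative, the roles of $R$ and $R^{\rm op}$ coincide, which simplifies the bookkeeping.

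First I would unpack the definition of a Frobenius extension from \cite[Theorem 1.2]{Kad} as recalled just above: $A$ is a finitely generated projective $R$-module, and there is an isomorphism $A\cong \Hom_R(A,R)$ of $A$-$R$-bimodules (equivalently, of both left and right $A$-modules once we use the $R$-central structure). The first half of condition (1) in Theorem \ref{ADTF} is then immediate: a finitely generated projective $R$-module is in particular Gorenstein projective over $R$. For the second half, the bimodule isomorphism $A\cong \Hom_R(A,R)$ transports the free rank-one left $A$-module structure on $A$ to $\Hom_R(A,R)$, so $\Hom_R(A,R)$ is projective (indeed free of rank one) as a left $A$-module; using condition (2) of the Frobenius definition, or symmetrically the right-module part of the bimodule isomorphism, one gets that $\Hom_R(A,R)$ is projective as a right $A$-module as well. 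Hence both requirements of Theorem \ref{ADTF}(1) hold, and Theorem \ref{ADTF} gives that $\varphi$ has ascent and descent of Gorenstein projective property.

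There is essentially no obstacle here; the only point requiring a little care is matching the sidedness conventions in the two equivalent formulations of a Frobenius extension with the precise statement of Theorem \ref{ADTF}(1), i.e.\ making sure that the isomorphism $A\cong\Hom_R(A,R)$ is being used on the correct side to conclude projectivity over $A$ and over $A^{\rm op}$ separately. Once that is settled, the corollary is a one-line consequence of Theorem \ref{ADTF}, and in fact it also reproves that $\Gdim_R(M)=\Gdim_A(M)$ for every finitely generated $A$-module $M$ over a Frobenius extension.
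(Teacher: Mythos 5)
Your proof is correct and matches the paper's approach exactly: the paper states that the corollary follows immediately from Theorem \ref{ADTF}, and your verification of condition (1) of that theorem — projective implies Gorenstein projective for $A$ over $R$, and the two sided Frobenius isomorphisms $A\cong\Hom_R(A,R)$ supply projectivity of $\Hom_R(A,R)$ over $A$ and over $A^{\rm op}$ — is precisely the intended argument. You also correctly flag the only subtle point, namely that the left- and right-sided projectivity claims require the two different (equivalent) formulations of the Frobenius condition, since the isomorphism in formulation (1) is only a left $A$- and right $R$-module map.
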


The next example shows that the converse of Corollary \ref{motivation} is not true.

\begin{example}\label{converseFro}
Consider the injection map which maps $t$ to $x^2$:
$$R=k\llbracket t\rrbracket/(t^2)\hookrightarrow A=k\llbracket x,y,z\rrbracket/(x^2-y^2,x^2-z^2,xy,xz,yz).$$
Both $R$ and $A$ are artinian Gorenstein local rings; see \cite[Example 3.2.11]{BH}. This map has ascent and descent of Gorenstein projective property, but it is not a Frobenius extension. Since $\dim_k(R)=2$ and $\dim_k(A)=5$, $A$ cannot be free over $R$. This implies that $A$ is not projective over $R$ as $R$ is local.
\end{example}

We end this section with an example that has ascent and descent of Gorenstein projective property, but it is not a Frobenius extension, not a ring homomorphism between Iwanaga-Gorenstein rings, and not a complete intersection map.

\begin{example}\label{eg2}
Let $S$ be a commutative noetherian ring which is not Iwanaga-Gorenstein. Consider the canonical surjection
$$\pi\colon R=S\llbracket x\rrbracket/(x^2)\twoheadrightarrow S.$$
One can check directly that $\Hom_R(S,R)\cong S$ as $S$-modules, and $S$ is Gorenstein projective as an $R$-module. It follows from Theorem \ref{ADTF} that $\pi$ has ascent and descent of Gorenstein projective property.
\end{example}

\section{Testing Iwanaga-Gorenstein rings}\label{testsection}

The main result of this section is the following.
\begin{theorem}\label{ascentG}
Let $\varphi\colon R\rightarrow A$ be a ring homomorphism, where $R$ is a commutative noetherian ring and $A$ is a finite $R$-algebra. Assume $\varphi$ has ascent and descent of finite Gorenstein dimension property. If $R$ is an Iwanaga-Gorenstein ring, then so is $A$. The converse holds if, in addition, the fibre $A\otimes_R R/\m$ is nonzero for each maximal ideal $\m$ of $R$.
\end{theorem}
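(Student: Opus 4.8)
The strategy is to route both implications through Theorem~\ref{observation}. Since $\varphi$ has ascent and descent of finite Gorenstein dimension property, Theorem~\ref{observation} gives $\Gdim_R(A)<\infty$ and that $D\colonequals\RHom_R(A,R)$ is perfect over both $A$ and $A^{\rm op}$; by Proposition~\ref{theta}(1), $D$ is moreover a semi-dualizing complex over $A$, and it is represented by $\Hom_R(A,I)$ for a chosen injective resolution $R\xrightarrow{\simeq}I$ over $R$. The guiding idea is that the Iwanaga-Gorenstein hypothesis on either side is exactly what promotes $D$ to a \emph{dualizing} complex, and a dualizing complex that is also perfect is rigid enough to force the Iwanaga-Gorenstein property on the other ring.

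\emph{From $R$ to $A$.} If $R$ is Iwanaga-Gorenstein, then $\id_R(R)<\infty$, so $I$ may be taken to be a bounded complex of injective $R$-modules. Because the restriction functor along $\varphi$ is exact with right adjoint $\Hom_R(A,-)$ (for both the left and the right module structures), each term $\Hom_R(A,I_j)$ is injective over $A$ and over $A^{\rm op}$; hence $\Hom_R(A,I)$ is a bounded complex of $A\mbox{-}A$-bimodules injective on both sides, and together with Proposition~\ref{theta}(1) this shows $D$ is a dualizing complex over $A$. Now I use perfectness of $D$. The homothety $A\to\RHom_{A^{\rm op}}(D,D)$ is a quasi-isomorphism, and since $D$ is perfect over $A^{\rm op}$ one can compute $\RHom_{A^{\rm op}}(D,D)$ as $\Hom_{A^{\rm op}}(P,D)$ for a bounded complex $P$ of finitely generated projective right $A$-modules; each term $\Hom_{A^{\rm op}}(P_k,D_l)$ is a direct summand of a finite direct sum of copies of the $A$-injective module $D_l$, so $\Hom_{A^{\rm op}}(P,D)$ is a bounded complex of injective left $A$-modules. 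Thus $A$ is isomorphic in $\D(A)$ to a bounded complex of injectives, that is, $\id_A(A)<\infty$; the symmetric argument (using the other homothety and perfectness of $D$ over $A$) gives $\id_{A^{\rm op}}(A)<\infty$. Hence $A$ is Iwanaga-Gorenstein.

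\emph{From $A$ to $R$.} Assume $A$ is Iwanaga-Gorenstein and $A\otimes_R R/\m\neq0$ for every maximal ideal $\m$ of $R$. First I show each $R_\m$ is Gorenstein local: since $A\otimes_R R/\m\cong A/\m A$ is a nonzero finite-dimensional algebra over the field $R/\m$, it has a simple module $T$, which as an $A$-module is nonzero, finitely generated, and annihilated by $\m$, so $T\cong(R/\m)^{\oplus n}$ with $n\geq1$ as an $R$-module. As $A$ is Iwanaga-Gorenstein, $\Gdim_A(T)<\infty$ by~\ref{def of I-Gorenstein}, hence descent of finite Gorenstein dimension yields $\Gdim_R(T)<\infty$, whence $\Gdim_{R_\m}(R_\m/\m R_\m)<\infty$ and $R_\m$ is Gorenstein local. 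Consequently every localization of $R$ is Gorenstein local, so $R$ is a Gorenstein ring. It remains to prove $\dim R<\infty$. Set $J\colonequals\ker\varphi=\ann_R(A)$; the fibre hypothesis forces $J\subseteq\m$ for every maximal ideal $\m$ (otherwise $A=\m A$, making the fibre vanish), and $R/J\hookrightarrow A$ is module-finite and faithful, so $\dim(R/J)=\dim A$, which is finite because the Iwanaga-Gorenstein ring $A$ has finite Krull dimension. Let $N$ be the maximum of the heights of the finitely many minimal primes of $J$; this is finite, each such height being the dimension of a Gorenstein local ring $R_\p$. For a maximal ideal $\m$, the ring $R_\m$ is Cohen-Macaulay, so $\dim R_\m=\dim(R_\m/J_\m)+\het_{R_\m}(J_\m)\leq\dim(R/J)+N$; taking the supremum over all maximal $\m$ gives $\dim R<\infty$. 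Since a Gorenstein ring of finite Krull dimension is Iwanaga-Gorenstein (see \cite[Theorem 4.1.1]{Buc}), $R$ is Iwanaga-Gorenstein.

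The routine ingredients — the adjunction and tensor-evaluation identities, stability of injectives under coinduction, the behaviour of Gorenstein dimension under finite direct sums and localization — are standard. I expect the genuine obstacle to be the second implication, and within it the upgrade from ``$R$ is a Gorenstein ring'' to ``$R$ is Iwanaga-Gorenstein'': the fibre condition is indispensable here, for without it the surjection $R\to R/\m$ onto a residue field would already be a counterexample, and one must combine it with the Cohen-Macaulay height formula and the uniform finiteness of the heights of the minimal primes of $\ker\varphi$ in order to rule out an infinite-dimensional Gorenstein $R$.
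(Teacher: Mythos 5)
Your forward implication is correct and is essentially the paper's own argument: Theorem~\ref{observation} makes $D=\RHom_R(A,R)$ perfect on both sides, $\id_R(R)<\infty$ makes $D$ a bounded complex of bimodules that is injective on both sides, and then resolving $D$ by a bounded complex of finitely generated projectives and invoking the homothety quasi-isomorphism from Proposition~\ref{theta}(1) exhibits $A$ as quasi-isomorphic to a bounded complex of injective $A$-modules (symmetrically over $A^{\rm op}$).

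The converse has a genuine gap at the step ``$\dim(R/J)=\dim A$, which is finite because the Iwanaga-Gorenstein ring $A$ has finite Krull dimension.'' Here $A$ is a possibly noncommutative ring: you neither say what $\dim A$ means nor justify either assertion. That a noncommutative noetherian ring with $\id_A(A)=\id_{A^{\rm op}}(A)<\infty$ has finite (classical) Krull dimension is not a formal consequence of Bass's theorem and cannot simply be asserted; worse, the standard notion of dimension for a module-finite $R$-algebra is $\dim\bigl(R/\ann_R(A)\bigr)=\dim(R/J)$ itself, so the step is circular. The rest of your converse is sound: the simple-module trick does show every $R_\m$, hence every $R_\p$, is Gorenstein local; the heights of the finitely many minimal primes of $J$ are uniformly bounded; and the Cohen--Macaulay dimension formula correctly reduces everything to $\dim(R/J)<\infty$. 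But the purely qualitative descent hypothesis cannot produce that uniform bound --- by Auslander--Bridger, $\Gdim_{R_\m}(R_\m/\m R_\m)=\dim R_\m$, so uniformly bounding the $R$-Gorenstein dimensions of your simple modules $T$ is exactly the problem you are trying to solve. The paper closes this by using the perfectness of $D$ quantitatively: $\Ext^i_R(M,R)\cong\Ext^i_A(M,D)$ for every $A$-module $M$, and since $D$ is perfect over $A$ and $\id_A(A)<\infty$, there is a single integer $j$ with $\Ext^{>j}_R(M,R)=0$ for \emph{all} $A$-modules $M$; taking $M=A\otimes_R R/\m$, which is nonzero and a direct sum of copies of $R/\m$ over $R$, gives $\Ext^{>j}_{R_\m}(R_\m/\m R_\m,R_\m)=0$ for every maximal ideal simultaneously, whence $\id_R(R)\leq j$. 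You should replace your dimension count with this uniform Ext-vanishing argument, or else supply a citable result (of Goto--Nishida type) bounding $\dim(R_\m/J_\m)$ by $\id_{A_\m}(A_\m)$ for noncommutative module-finite algebras.
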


\begin{proof}
Let $R$ be an Iwanaga-Gorenstein ring. Take a minimal injective resolution $ R\xrightarrow \simeq I$ over $R$.
Note that for each injective $R$-module $E$, $\Hom_R(A,E)$ is injective over both $A$ and $A^{\rm op}$. Combining with $\id_R(R)<\infty$, then $D\colonequals\Hom_R(A,I)$ is a bounded complex of $A\mbox{-}A$-bimodules and each term $D_i$ is injective over both $A$ and $A^{\rm op}$.

Since $\varphi$ has ascent and descent of finite Gorenstein dimension property, Theorem \ref{observation}  yields that $D$ is perfect over both $A$ and $A^{\rm op}$. Then one can choose a projective resolution $\pi\colon Q\xrightarrow \simeq D$ over $A^{\rm op}$ such that $Q$ is a bounded complex of finitely generated projective $A^{\rm op}$-modules.

For any finitely generated projective $A^{\rm op}$-module $P$, $\Hom_{A^{\rm op}}(P,A)$ is a projective $A$-module.
For any $A\mbox{-}A$-bimodule $E$, if $E$ is an injective $A$-module, then it follows from \cite[Theorem 3.2.16]{EJ} that as an $A$-module, $\Hom_{A^{\rm op}}(P,E)\cong E\otimes_A \Hom_{A^{\rm op}}(P,A)$ is also injective.
Hence, $\Hom_{A^{\rm op}}(Q,D)$ is a bounded complex of injective $A$-modules.

Consider the  $A$-linear morphisms
\begin{align*}
    A\xrightarrow \simeq \Hom_{A^{\rm op}}(D,D)
     \xrightarrow {\pi^\ast} \Hom_{A^{\rm op}}(Q,D),
\end{align*}
where the first quasi-isomorphism is from Proposition \ref{theta}. Since $D$ is homotopy injective, $\pi^\ast$ is also a quasi-isomorphism. It follows that $\id_A(A)<\infty$. The same argument will show $\id_{A^{\rm op}}(A)<\infty$. Therefore, $A$ is an Iwanaga-Gorenstein ring.

Conversely, we assume $A$ is an Iwanaga-Gorenstein ring and $A\otimes_R R/\m\neq 0$ for each maximal ideal $\m$ of $R$. For each $i$ and each  $A$-module $M$, there is an isomorphism
$$\Ext^i_R(M,R)\cong \Ext^i_A(M,D).$$
Note that $D$ is perfect over both $A$ and $A^{\rm op}$ by Theorem \ref{observation}. Combining this with $\id_A(A)<\infty$, we conclude that there exists an positive integer $j$ such that $\Ext^{>j}_R(M, R)=0$ for every  $A$-module $M$. For each maximal ideal $\m$ of $R$, by assumption $A\otimes_R R/\m$ is not zero, and then it is a direct sum of some copies of $R/\m$ over $R$. If we choose $M$ to be $A\otimes_R R/\m$, then $\Ext^{>j}_R(R/\m,R)=0$.
In particular, $$\Ext^{>j}_{R_\m}(R_\m/\m R_\m,R_\m)=0.$$
This implies that $\id_{R_\m}(R_\m)\leq j$; see \cite[Proposition 3.1.14]{BH}. Since $\m$ is arbitrary, $\id_R(R)\leq j$.
Hence, $R$ is an Iwanaga-Gorenstein ring. This completes the proof.
\end{proof}

Combining Theorem \ref{observation}, Theorem \ref{ascentG}, and Christensen’s result \cite[Proposition 8.3]{Chr01}, one can get the following result which is due to Avramov and Foxby \cite[4.4.4 and 7.7.2]{AF}; see also \cite[Theorem 6.2]{ISW}.

\begin{corollary}\label{recover}
Let $\varphi\colon (R,\m)\rightarrow (A,\n)$ be a finite local ring homomorphism. If $A$ is Gorenstein, then
$R$ is Gorenstein if and only if $\Gdim_R(A)$ is finite.
\end{corollary}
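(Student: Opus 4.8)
The plan is to assemble the corollary from Theorems~\ref{observation} and~\ref{ascentG}, using Christensen's result only to verify the perfectness hypothesis. I begin with two bookkeeping reductions. Since $R$ and $A$ are commutative noetherian local rings they have finite Krull dimension, so for each of them ``Gorenstein'' coincides with ``Iwanaga-Gorenstein'' (Iwanaga-Gorenstein always implies Gorenstein, and the converse holds in finite Krull dimension by \cite[Theorem 4.1.1]{Buc}); in particular, if $A$ is Gorenstein then $A$ is Iwanaga-Gorenstein, and once we prove $R$ is Iwanaga-Gorenstein we may conclude $R$ is Gorenstein. Moreover, since $\varphi$ is local we have $\m A\subseteq\n$, so the fibre $A\otimes_R R/\m=A/\m A$ is nonzero and $\m$ is the only maximal ideal of $R$; hence the extra hypothesis in the converse part of Theorem~\ref{ascentG} is automatic in our situation.

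For the implication ``$R$ Gorenstein $\Rightarrow\Gdim_R(A)<\infty$'' I would not even need that $A$ is Gorenstein: if $R$ is Gorenstein local then $R$ is Iwanaga-Gorenstein, so by \ref{def of I-Gorenstein} every finitely generated $R$-module, and in particular $A$, has finite Gorenstein dimension over $R$.

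For the converse ``$\Gdim_R(A)<\infty\Rightarrow R$ Gorenstein'' (here using that $A$ is Gorenstein), fix an injective resolution $R\xrightarrow{\simeq}I$ and set $D=\Hom_R(A,I)$, a representative of $\RHom_R(A,R)$. Since $\Gdim_R(A)<\infty$, Proposition~\ref{theta}(1) gives that $D$ is a semi-dualizing complex over $A$; and since $A$ is a commutative Gorenstein local ring, \cite[Proposition 8.3]{Chr01} shows $D$ is isomorphic to a suspension of $A$ in $\D^f_b(A)$, so in particular $D$ is perfect over $A$ (hence also over $A^{\rm op}=A$). Thus condition (\ref{basic}) of Theorem~\ref{observation} is satisfied, so $\varphi$ has ascent and descent of finite Gorenstein dimension property, and the converse part of Theorem~\ref{ascentG} --- applicable since $A$ is Iwanaga-Gorenstein and the fibre $A\otimes_R R/\m$ is nonzero --- yields that $R$ is Iwanaga-Gorenstein, hence Gorenstein. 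The only step that is not purely formal is the appeal to \cite[Proposition 8.3]{Chr01} to pass from ``$A$ Gorenstein'' to ``$\RHom_R(A,R)$ perfect over $A$''; everything else is manipulation of the definitions of local homomorphism and Iwanaga-Gorenstein ring, so I expect this citation to be the only real point requiring care. (If one wishes to avoid invoking the triviality of semi-dualizing complexes over Gorenstein rings, one may instead note that $D$ has finite Gorenstein dimension over the Iwanaga-Gorenstein ring $A$ by \ref{def of I-Gorenstein} and combine this with the semi-dualizing property to force $D$ to be perfect.)
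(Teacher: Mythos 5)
Your proof is correct and follows essentially the same route as the paper's: represent $\RHom_R(A,R)$ by $D=\Hom_R(A,I)$, invoke Proposition~\ref{theta} to see $D$ is semi-dualizing, use that $A$ is Gorenstein together with \cite[Proposition 8.3]{Chr01} to conclude $D$ is a suspension of $A$ (hence perfect), and then chain Theorems~\ref{observation} and~\ref{ascentG}, with the local hypothesis guaranteeing a nonzero fibre. The paper's phrasing of the appeal to \cite[Proposition 8.3]{Chr01} matches your parenthetical remark --- it first records $\Gdim_A(D)<\infty$ from the Gorensteinness of $A$ and only then applies that proposition, rather than citing the Gorenstein case outright --- but the content is the same.
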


\begin{proof}
The ``only if'' part is clear since $R$ is a Gorenstein ring.

For the ``if'' part, assume $\Gdim_R(A)$ is finite. This implies that $D=\Hom_R(A,I)$ is in $\D^f_b(A)$ (see Lemma \ref{test}), where $I$ is an injective resolution of $R$. Since $A$ is Gorenstein, $\Gdim_A(D)<\infty$.  By Proposition \ref{theta}, $D$ is semi-dualizing. 
It follows from \cite[Proposition 8.3]{Chr01} that $D\simeq A[j]$ for some $j\in \mathbb Z$.  Combining this with the finiteness of $\Gdim_R(A)$, Theorem \ref{observation} yields that $\varphi$ has ascent and descent of finite Gorenstein dimension property. Thus, $R$ is Gorenstein by Theorem \ref{ascentG}.
\end{proof}

\begin{ack}
The authors are grateful to X.-W. Chen, Z.Y. Huang, S.B. Iyengar, and C. Psaroudakis for their helpful comments and suggestions. The authors sincerely thank an anonymous referee for carefully reading the manuscript and providing valuable comments and suggestions to improve the article. 
The first author is supported by the National Natural Science Foundation of China (No. 12401046) and the Fundamental Research Funds for the Central Universities (No. CCNU24JC001).
The second author is supported by the National Natural Science Foundation of China (No. 11871125). 
\end{ack}

\bibliography{}

\end{document}